\newtheorem{thm}{Theorem}[section]
\newtheorem{lem}[thm]{Lemma}
\theoremstyle{definition}
\newtheorem{rem}[thm]{Remark}
\newtheorem{exmp}[thm]{Example}
\newcommand{\CC}{\mathbb{C}}                
\newcommand{\RR}{\mathbb{R}}                
\newcommand{\QQ}{\mathbb{Q}}                
\newcommand{\Ela}{\mathbb{E}\mathrm{la}}    
\newcommand{\HH}{\mathbb{H}}                
\newcommand{\Sym}{\mathbb{S}}               
\newcommand{\SO}{\mathrm{SO}}               
\newcommand{\octa}{\mathbb{O}}              
\newcommand{\ee}{\pmb{e}}                   
\newcommand{\uu}{\pmb{u}}                   
\newcommand{\xx}{\pmb{x}}                   
\newcommand{\qq}{\mathrm{q}}                
\newcommand{\blambda}{\pmb{\lambda}}        
\newcommand{\bsigma}{\pmb{\sigma}}          
\newcommand{\ba}{\mathbf{a}}
\newcommand{\bb}{\mathbf{b}}
\newcommand{\bc}{\mathbf{c}}
\newcommand{\bd}{\mathbf{d}}
\newcommand{\be}{\mathbf{e}}
\newcommand{\Id}{\mathbf{1}}                
\newcommand{\bh}{\mathbf{h}}
\newcommand{\bv}{\mathbf{v}}
\newcommand{\bw}{\mathbf{w}}
\newcommand{\bE}{\mathbf{E}}                
\newcommand{\bF}{\mathbf{F}}                
\newcommand{\bH}{\mathbf{H}}                
\newcommand{\bI}{\mathbf{I}}                
\newcommand{\bK}{\mathbf{K}}                
\newcommand{\bL}{\mathbf{L}}                
\newcommand{\bA}{\mathbf{A}}                
\newcommand{\bS}{\mathbf{S}}                
\newcommand{\bT}{\mathbf{T}}                
\newcommand{\bP}{\mathbf{P}}                
\newcommand{\lc}{\pmb \varepsilon}          
\DeclareMathOperator{\tr}{tr}
\DeclareMathOperator{\Orb}{Orb}
\DeclareMathOperator{\grad}{grad}
\DeclareMathOperator{\3dots}{\raisebox{-0.25ex}{\vdots}}
\newcommand{\dd}{\mathrm{d}}
\newcommand{\norm}[1]{\lVert#1\rVert}       
\newcommand{\set}[1]{\left\{#1\right\}}     
\begin{document}

\title[The distance to cubic symmetry class as a polynomial optimization problem]{The distance to cubic symmetry class \protect\\ as a polynomial optimization problem}%

\author{P. Azzi}
\address[Perla Azzi]{Université Paris-Saclay, CentraleSupélec, ENS Paris-Saclay, CNRS, Laboratoire de Mécanique Paris-Saclay, 91190, Gif-sur-Yvette, France.}
\address{Sorbonne Université, Institut de Mathématiques de Jussieu-Paris Rive Gauche, 4 place Jussieu, 75005, Paris, France}
\email[Perla Azzi]{perla.azzi@ens-paris-saclay.fr}

\author{R. Desmorat}
\address[Rodrigue Desmorat]{Université Paris-Saclay, CentraleSupélec, ENS Paris-Saclay, CNRS, Laboratoire de Mécanique Paris-Saclay, 91190, Gif-sur-Yvette, France.}
\email{rodrigue.desmorat@ens-paris-saclay.fr}

\author{B. Kolev}
\address[Boris Kolev]{Université Paris-Saclay, CentraleSupélec, ENS Paris-Saclay, CNRS, Laboratoire de Mécanique Paris-Saclay, 91190, Gif-sur-Yvette, France.}
\email{boris.kolev@math.cnrs.fr}

\author{F. Priziac}
\address[Fabien Priziac]{Univ Bretagne Sud, CNRS UMR 6205, LMBA, F-56000 Vannes, France}
\email{fabien.priziac@univ-ubs.fr}

\date{March 25, 2022}%
\subjclass[2020]{74B05; 74C05; 74E10; 90C23}
\keywords{distance to a symmetry class;  cubic symmetry; polynomial optimization; Euler--Lagrange method}%

\thanks{The authors were partially supported by CNRS Projet 80--Prime GAMM (Géométrie algébrique complexe/réelle et mécanique des matériaux).}


\begin{abstract}
  Generically,  a fully measured elasticity tensor has no material symmetry. For single crystals  with a cubic lattice, or for the aeronautics turbine blades superalloys such as Nickel-based CMSX-4, cubic symmetry is nevertheless expected. It is in practice necessary to compute the nearest cubic elasticity tensor to a given raw one. Mathematically formulated, the problem consists in finding the distance between a given tensor and the cubic symmetry stratum.

  It is known that closed symmetry strata (for any tensorial representation of the rotation group) are semialgebraic sets, defined by polynomial equations and inequalities. It has been recently shown that the closed cubic elasticity stratum is moreover algebraic, which means that it can be defined by polynomial equations only (without requirement to polynomial inequalities). We propose to make use of this mathematical property to formulate the distance to cubic symmetry problem as a polynomial (in fact quadratic) optimization problem, and to derive its quasi-analytical solution using the technique of Gröbner bases. The proposed methodology also applies to cubic Hill elasto-plasticity (where two fourth-order constitutive tensors are involved).
\end{abstract}

\maketitle

%

\section{Introduction}
\label{sec:intro}

Anisotropic elasto-plasticity theories introduce (at least) two fourth-order constitutive tensors, the Hooke and the Hill  tensors for instance. It is nowadays possible to measure/determine all their components
\cite{KRG1971,AHR1991,Art1993,Fra1995,FGB1998,Del2005,BAA2006,FGB1998,MTG2018}. These measured constitutive tensors are however generically triclinic (they have no material symmetry).

On the other hand, many materials (such as composite/engineered materials, single crystal superalloys or rocks) have an expected symmetry, most often due to their microstructure and their elaboration process. In practice, appealing to Curie principle (``the symmetries of the causes are to be found in the effects''), their constitutive tensors shall inherit the material symmetry (orthotropy, cubic or monoclinic symmetry for example), so that the natural question is
\emph{to determine the constitutive tensor with a given material symmetry the nearest to a given measured (triclinic) constitutive tensor.} This question has been extensively studied, from both the theoretical and numerical points of view, since the pioneering work of Gazis, Tadjbakhsh and Toupin \cite{GTT1963}, and subsequent works in the 90s \cite{AHR1991,Art1993,Fra1995,FGB1998}.
Most works focus on the elasticity tensor \cite{GTT1963,Fed1968,Fra1995,FBG1996,Hel1996,FGB1998,MN2006,KS2009,DKS2011}, a few ones
on the piezoelectricity tensor \cite{ZTP2013}. So far, we are not aware of some similar studies for the Hill plasticity tensor or the combination of several constitutive tensors.

Even if some analytical attempts exist \cite{Via1997,SMB2020,ADKD2021}, the \emph{distance to an elasticity symmetry class problem} is most often solved numerically, following \cite{FGB1998}, using the far from being injective parameterization of a symmetry class by its normal form $\bA$ (for instance \eqref{eq:VoigtCubic} for cubic symmetry \cite{Fed1968})
and a rotation $Q\in \SO(3)$,
\begin{equation*}
  \bE=Q\star \bA
  \qquad
  \left(
  \emph{i.e.}, \; E_{ijkl}=Q_{ip}Q_{jq}Q_{kr}Q_{ls} A_{pqrs}
  \right)
  ,
\end{equation*}
where $\star$ stands for the action of the rotation $Q$ on the tensor $\bA$ \cite{FV1996}. Letting $\bE_{0}$ be the given experimental (raw) elasticity tensor, one has then to minimize the squared norm
\begin{equation}\label{eq:distanceEla}
  \min_{\bE \, \text{of a given symmetry}} \norm{\bE_{0}-\bE}^{2}
  = \min_{Q,\bA} \norm{\bE_{0}-Q \star \bA}^{2}.
\end{equation}
The rotation $Q$ can be parameterized by the Euler angles or by a unit quaternion $\qq$ \cite{KS2008,KS2009}, allowing, in the second case, for the formulation of the considered distance problem as a \emph{polynomial optimization problem}. Indeed,
the function $\norm{\bE_{0}-Q(\qq) \star \bA}^{2}$ is then quadratic in $\bA$ and polynomial of degree 16 in $\qq$. Note, however, that a pair $(Q, \bA)$ is far from representing uniquely a tensor $\bE$. For instance, a cubic elasticity tensor $\bE$ is represented by 24 pairs $(Q, \bA)$ or 48 pairs $(\qq, \bA)$ (since $Q(\qq) = Q(-\qq)$). This means that we expect to find at least 24 global minima $(Q, \bA)$, or 48 global minima $(\qq, \bA)$ to problem~\eqref{eq:distanceEla}, which correspond to the same cubic elasticity tensor $\bE$. More generally, there are at least as many global minima $(Q, \bA)$ as there are symmetries of $\bA$ (24 for cubic symmetry).

Solving the distance to a given symmetry problem~\eqref{eq:distanceEla} is usually done numerically with, then, the risk to reach a local minimum instead of a global one~\cite{FGB1998,MN2006,DKS2015}. To overcome this difficulty, François and coworkers proposed to plot first pole figures for the given elasticity tensor $\bE_{0}$ \cite{Fra1995,FBG1996} (renamed plots of the monoclinic distance in~\cite{MN2006,KS2009}). Accordingly, they got an initial value for $\bE=Q\star\bA$, not too far from $\bE_{0}$, which was then optimized by a standard numerical (iterative) scheme.

Computational algebraic or semialgebraic optimization methods have been developed to find (directly) the global minimum of a multi-variable polynomial function (with polynomial constraints), using semidefinite programming for example
\cite{NN1992,Ali1995,VB1996,Tod2001,Lau2009,WSV2012,Las2015}.
On the other hand, there are nowadays symbolic computation methods to solve sets of polynomial equations, for instance the method of Gröbner bases \cite{Buc1965} (see also \cite{CLO2007,Stu1993}). These methods work well when the number of variables (\emph{i.e.}, of unknowns) is small and when the degree of the polynomials remains low \cite{Las2015}. The Gröbner basis method is available in the algebraic geometry software Macaulay2 \cite{Macau2} and in most Computer Algebra Systems. It does not make any numerical approximation if the coefficients of the considered polynomials are rational numbers and the Gröbner basis method can be seen as quasi-analytical. We use the prefix \emph{quasi} because at one step, after an exact variables elimination process, one has to solve a polynomial equation in one variable, the remaining equations becoming afterwards linear.

The elasticity symmetry classes have been characterized by polynomial equations and inequalities in \cite{OKDD2021} (see also \cite{ADD2020}, or \cite{AKP2014} for the case of harmonic fourth-order tensors), illustrating the mathematical property  that the closed $\SO(3)$-symmetry strata\footnote{A symmetry stratum is the set of all tensors which have the same symmetry class.} are semialgebraic sets~\cite{AS1981,AS1983,PS1985,Sch1989}. The necessary and sufficient conditions for a Hooke tensor to belong to one of the eight elasticity symmetry strata have been formulated using polynomial covariants (in a coordinate-free manner). For a Hooke tensor, the cubic stratum is characterized by quadratic equations~\cite[Theorem 10.3]{OKDD2021}. Therefore, one hopes to formulate the \emph{distance to cubic elasticity problem} as a quadratic optimization problem (of much lower degree than for the normal form/quaternion parameterization method) and expects a quasi-analytical solution using the Gröbner basis method. To succeed, one will simply have to derive first-order Euler--Lagrange equations for the corresponding quadratic optimization problem.

Cubic symmetry is of most importance for Ni-based single crystal superalloys, such as CMSX-4 \cite{FS1987,PT2006,Ree2006}, the material of aircrafts gas turbine blades (subject to (visco-)plasticity \cite{LC1985,BCCF2012}). Thanks to the harmonic decomposition \cite{Bac1970,Spe1970,Cow1989,Bae1993}, the geometry of cubic fourth order tensors is now well understood. This will make it possible to formulate the calculation of the distance to cubic symmetry as a polynomial optimization problem, not only for a single elasticity tensor, but also for a pair $(\bE, \bP)$ of two fourth-order constitutive tensors. Here, $\bE$ is understood as the Hooke (elasticity) tensor and $\bP$ as the Hill (plasticity) tensor. Indeed, we shall see that this pair of tensors is at least cubic if and only if the harmonic second-order components of $\bE$ and $\bP$ vanish and if their harmonic fourth-order components are at least cubic and proportional.

We will make use of the reformulation of the \emph{distance to cubic elasticity} as a \emph{quadratic optimization problem},
in order to solve it quasi-analytically. In practice, this will be done thanks to the \emph{theory of Gröbner bases}.
We will  take advantage of the fact that the material parameters, such as the components of an experimental elasticity tensor, are measured with only a few significant digits to work with rational coefficients polynomials. This point is of main importance in the resolution of a system of polynomial equations by the obtention of a Gröbner basis (see remark \ref{rem:coeffQ} in the Appendix).

The paper is organized as follows. The Euler--Lagrange method for solving constrained optimization problems is briefly presented in section \ref{sec:EL-method}. Background materials on cubic  constitutive tensors are recalled in \autoref{sec:cubic-laws} and \autoref{sec:cubic-pair}. The problem of the distance to cubic elasticity is formulated as a polynomial (quadratic) optimization problem in \autoref{sec:distance-cubic-elasticity} and solved thanks to the theory of Gröbner bases in \autoref{sec:num-cubic-elasticity}. The extension to the pair $(\bE, \bP)$ of the Hooke and Hill tensors is described in \autoref{sec:distance-cubic-elasto-plasticity} and \autoref{sec:num-cubic-elasto-plasticity}. Finally, in \autoref{sec:recovering-cubic-normal-form}, we explain how to compute a natural cubic basis for any given cubic Hooke tensor. To be self-contained, a summary of Gröbner bases methods for algebraic elimination is provided in \autoref{sec:groebner}.

\subsection*{Notations}

We are working in orthonormal bases, so that we do not have to distinguish between covariant and contravariant tensors.
The tensor product is denoted by $\otimes$. An \emph{harmonic tensor} is a traceless totally symmetric tensor.
The space of harmonic tensors of order $n$ will be denoted by $\HH^{n}(\RR^{3})$ or simply $\HH^n$. It is a subspace of dimension $2n + 1$ of the vector space $\Sym^{n}(\RR^{3})$, the space of totally symmetric tensors $\bS=\bS^{s}$ of order $n$ (where $(\cdot)^{s}$ is the symmetrization operator).

Let $\bS=\bS^{s}\in \Sym^{p}(\RR^{3})$ (of order $p$) and $\bT=\bT^{s}\in \Sym^{q}(\RR^{3})$ (of order $q$) be two totally symmetric tensors. The \emph{totally symmetric tensor product} $\odot$ is defined by
\begin{equation*}
  \bS\odot \bT :=(\bS\otimes \bT)^{s}\in \Sym^{p+q}(\RR^{3}).
\end{equation*}
It is a totally symmetric tensor (of order $p+q$). The \emph{generalized cross product} between two totally symmetric tensors, which was introduced in~\cite{OKDD2021}, is defined by
\begin{equation}\label{eq:SxT}
  \bS\times \bT := (\bT\cdot \lc \cdot \bS)^{s}\in \Sym^{p+q-1},
\end{equation}
where $\lc=(\varepsilon_{ijk})$ is the Levi-Civita tensor. In components, it is written as
\begin{equation*}
  (T_{i_{1} \dotsc i_{p-1} k} \, \varepsilon_{k i_{p} l}\,  S_{l i_{p+1} \dotsc i_{p+q-1}})^{s}.
\end{equation*}

A dot denotes a contraction between two tensors and several dots, several contractions. For instance
\begin{gather*}
  (\ba\cdot \bb)_{ij} = a_{ik}b_{kj}, \qquad \ba: \bb=a_{ij}b_{ij},
  \\
  (\bH : \ba)_{ij} = H_{ijkl}a_{kl}, \qquad (\bH : \bK)_{ijkl} = H_{ijpq}K_{pqkl}, \qquad (\bH \3dots \bK)_{ij} = H_{ipqr}K_{pqrj},
\end{gather*}
where $\ba$, $\bb$ are second-order tensors and $\bH$, $\bK$, fourth-order tensors. The usual abbreviations $\bH^{2}=\bH : \bH$ and $\bH^{3}=\bH : \bH:\bH$ shall also be used.

\section{The Euler--Lagrange method for polynomial functions and constraints}
\label{sec:EL-method}

The simplest method to solve a minimization problem for a polynomial function $f$, defined on $\RR^{n}$, is probably the \emph{Euler--Lagrange method}, which consists in looking for its critical points. The critical points of $f$ are solutions of a system of algebraic equations which may be solved using \emph{Gröbner bases} for example (see \autoref{sec:groebner}). When, moreover, polynomial algebraic constraints $g(\xx) = 0$ are involved, where
\begin{equation*}
  g : \RR^{n} \to \RR^{p},
\end{equation*}
is a smooth vector-valued function, the \emph{method of Lagrange multipliers} can be used~\cite{BD1990,Kal2009,Laf2015}. In geometric terms, the constraint problem means that we seek for critical points of the restriction of $f$ to the submanifold of $\RR^{n}$
\begin{equation}\label{eq:submersion-condition}
  S := \set{\xx \in \RR^{n};\; g(\xx)=0}.
\end{equation}
This requires that the constraint function, $g : \RR^{n} \to \RR^{p}$, is a \emph{submersion} on $S =g^{-1}(0)$, which means that the linear tangent map (\emph{i.e.}, here the Jacobian matrix)
\begin{equation*}
  T_{\xx}g: \RR^{n} \to \RR^{p}
\end{equation*}
is of maximal rank $p$ at each point $\xx \in S$ (which requires that $n \ge p$). This condition ensures that $S$ is a smooth \emph{submanifold} of $\RR^n$ of dimension $n-p$~\cite{Laf2015}. In that case, one can show, using the implicit function theorem, that the solutions of the constrained problem
\begin{equation}\label{eq:constrained-problem}
  \min_{\xx}\, f(\xx) \quad \text{with} \quad g(\xx)=0,
\end{equation}
are critical points of the function
\begin{equation}\label{eq:Fxlambda}
  F(\xx,\blambda) = f(\xx) + (\blambda,g(\xx)),
\end{equation}
where $(\cdot,\cdot)$ is the duality bracket on $\RR^{p}$ and the dual variable $\blambda$ is the \emph{Lagrange multiplier}. A proof of this fact can be found in~\cite[Theorem 3.5.27]{AMR1988}.

\begin{rem}
  Note however that the extrema of $f$ are in general saddle points of $F$~\cite{Kal2009}. Therefore, one should not make the false statement that the minimization of the constrained problem \eqref{eq:constrained-problem} is equivalent to the minimization of the function~\eqref{eq:Fxlambda}.
\end{rem}

The critical points of~\eqref{eq:Fxlambda} are the solutions of the algebraic system
\begin{equation}\label{eq:EL-submersion}
  \begin{cases}
    \displaystyle \frac{\partial F}{\partial \xx} = \frac{\partial f}{\partial \xx} + \left(\blambda,\frac{\partial g}{\partial \xx}\right) = 0,
    \\
    \displaystyle \frac{\partial F}{\partial \blambda} = g = 0.
  \end{cases}
\end{equation}
These equations are referred to as (first-order) \emph{Euler--Lagrange equations with constraints}.

In practice, however, the problem is not that simple. In several problems, the set $S = g^{-1}(0)$ contains some point $\xx$ at which $g$ is not a submersion. Worse, in the following example, which concerns the distance of a deviatoric second order tensors $\xx=\bh\in \HH^{2}$ to transverse isotropy, the gradient of $g$ is singular at each point $\bh$ of $S$.

\begin{exmp}[The transversely isotropic (closed) strata in $\HH^{2}$]
  It is the vector subspace $S$ of $\HH^{2}$ of deviatoric tensors which have at least two identical eigenvalues. The set $S$ is defined implicitly by the polynomial equation
  \begin{equation*}
    g(\bh):=\left(\tr \bh^{2}\right)^{3}-6 \left(\tr \bh^{3}\right)^{2}=0,
    \qquad \bh \in \HH^{2}.
  \end{equation*}
  Unfortunately, the gradient of $g$ in $\HH^{2}$
  \begin{equation*}
    \grad_{\bh} g = 6\left((\tr \bh^{2})^{2}\bh - 6(\tr \bh^{3})(\bh^{2})^{\prime} \right)
  \end{equation*}
  vanishes identically on the set $S = g^{-1}(0)$, since, when $\bh \in \HH^{2}$ is transversely isotropic, we have
  \begin{equation*}
    (\bh^{2})^{\prime} = \frac{\tr \bh^{3}}{\tr \bh^{2}} \, \bh
  \end{equation*}
  and thus
  \begin{equation*}
    \grad_{\bh} g  = \frac{6g(\bh)}{\tr \bh^{2}}\bh = 0.
  \end{equation*}
\end{exmp}

Fortunately, all situations are not as bad as in this example but singularities may still exist. In the following examples, concerning respectively cubic fourth-order harmonic tensors and elasticity tensors, of main interest for the present work, the set $S$ is defined by a mapping $g$ which is a submersion on a big open subset of $S$, but not on all of $S$.

\begin{exmp}[The cubic (closed) strata in $\HH^{4}$]
  It is the vector subspace $S$ of $\HH^{4}$ of fourth-order harmonic tensors which are at least cubic. It was shown in~\cite[Theorem 9.3]{OKDD2021}, that this set can be defined as
  \begin{equation*}
    S = \set{\bH \in \HH^{4};\; g(\bH)=0},
  \end{equation*}
  where
  \begin{equation*}
    g: \HH^{4} \to \HH^{2}
  \end{equation*}
  is a polynomial mapping of degree $2$. One can check that $g$ is a submersion at each cubic tensor $\bH \ne 0$, but not at $\bH=0$, which is a singular point.
\end{exmp}

\begin{exmp}[The cubic (closed) strata in $\Ela$]
  It is the vector subspace $S$ of $\Ela$ of elasticity tensors which are at least cubic. It was shown in~\cite[Theorem 10.2]{OKDD2021}, that this set can be defined as
  \begin{equation*}
    S = \set{\bE \in \Ela;\; g(\bE)=0},
  \end{equation*}
  where
  \begin{equation*}
    g: \Ela \to \HH^{2} \oplus \HH^{2} \oplus \HH^{2}
  \end{equation*}
  is a polynomial mapping. One can check that $g$ is a submersion at each point $\bE \in S$ if $\bE$ is cubic, but not if $\bE$ is isotropic.
\end{exmp}

\begin{rem}
  In \cite{GD2014} and \cite{WYZ2019} are proposed some algorithms to solve the constrained problem \eqref{eq:constrained-problem} even when the polynomial mapping $g$ is not a submersion, under some further hypotheses (the implementation of the algorithm proposed in \cite{GD2014} is available on the webpage of the first author as a Maple library). These algorithms involve the notions of \emph{nonsingular} and \emph{singular} points of the real algebraic set $S = \set{\xx \in \RR^{n};\; g(\xx)=0}$. Under some hypotheses on $S$ and on the polynomial coordinate functions $(g_1,\dotsc,g_p)$ of $g$, a point $\xx$ of $S$ is said to be nonsingular if the Jacobian matrix of $g$ at $\xx$ is of rank $n-d$, where $d$ is the {\it dimension} of the real algebraic set $S$ (which is by definition the so-called {\it Krull dimension} of the ring of polynomial functions on $S$), otherwise $\xx$ is said to be singular. If the real algebraic set $S$ has no singular point, it is said to be nonsingular and, in this case, $S$ is a smooth submanifold of $\RR^n$ of dimension $d$ (precise definitions and properties can be found in \cite{BCM}). However, the correctness of the subroutine {\it GenCritValues} of \cite[Section 4]{WYZ2019} is, as far as we understand, not clear for us since it refers to an algorithm of \cite{JelKur2005} which is carried out on {\it complex} algebraic sets, not on real ones.
\end{rem}

\section{Cubic elasticity tensors}
\label{sec:cubic-laws}

The space of elasticity tensors~\cite{FV1996}, denoted by $\Ela$, is the space of fourth-order tensors $\bE$ with the following index symmetries
\begin{equation*}
  E_{ijkl}=E_{jikl}=E_{ijlk}=E_{klij}.
\end{equation*}
$\Ela$ is a vector space of dimension 21 and an elasticity tensor $\bE \in \Ela$ can be represented in Voigt notation by the matrix
\begin{equation*}
  [\bE]=
  \begin{pmatrix}
    E_{1111} & E_{1122} & E_{1133} & E_{1123} & E_{1113} & E_{1112} \\
    E_{1122} & E_{2222} & E_{2233} & E_{2223} & E_{1223} & E_{1222} \\
    E_{1133} & E_{2233} & E_{3333} & E_{2333} & E_{1333} & E_{1233} \\
    E_{1123} & E_{2223} & E_{2333} & E_{2323} & E_{2331} & E_{2312} \\
    E_{1113} & E_{1223} & E_{1333} & E_{2331} & E_{1313} & E_{3112} \\
    E_{1112} & E_{1222} & E_{1233} & E_{2312} & E_{3112} & E_{1212}
  \end{pmatrix}.
\end{equation*}
If $\bE$ has \emph{at least} the cubic symmetry, there exists an orthonormal basis $(\ee_{i})$ (called the natural basis or the cubic basis), in which $\bE$ has the so-called \emph{cubic normal form} in Voigt representation
\begin{equation}
  \label{eq:VoigtCubic}
  [\bE]=\begin{pmatrix}
    E_{1111} & E_{1122} & E_{1122} & 0        & 0        & 0        \\
    E_{1122} & E_{1111} & E_{1122} & 0        & 0        & 0        \\
    E_{1122} & E_{1122} & E_{1111} & 0        & 0        & 0        \\
    0        & 0        & 0        & E_{1212} & 0        & 0        \\
    0        & 0        & 0        & 0        & E_{1212} & 0        \\
    0        & 0        & 0        & 0        & 0        & E_{1212}
  \end{pmatrix}_{(\ee_{1}, \ee_{2}, \ee_{3})}.
  \quad
\end{equation}
If $E_{1111} - E_{1122} - 2E_{1212}=0$, then $\bE$ is isotropic. Otherwise, it is cubic. One may point out the cubic symmetry group ($\octa$) and write $\bE_\octa$ for the normal form of a cubic tensor $\bE$.

If $E$, $\nu$ and $G$, respectively denote the Young modulus, the Poisson ratio, and the shear modulus of a material, the engineer's expressions for the $E_{ijkl}$ are
\begin{equation*}
  \begin{cases}
    E_{1111}=\displaystyle\frac{(1-\nu)E}{1-\nu-2 \nu ^{2}},
    \\
    E_{1122}=\displaystyle\frac{\nu E}{1-\nu-2 \nu ^{2}},
    \\
    E_{1212}=G,
  \end{cases}
\end{equation*}
and $G\neq E/2(1+\nu)$ when $\bE$ is cubic. In intrinsic notations, a cubic elasticity tensor $\bE$ can be rewritten as in~\cite{FV1996}
\begin{equation}\label{eq:Eharm}
  \bE= 2\mu\, \mathbf{I} + \lambda \Id \otimes \Id+\bH,
  \qquad
  \bH\neq 0,
\end{equation}
where $\bI$ is the fourth order tensor with components $I_{ijkl}=\frac{1}{2} (\delta_{ik}\delta_{jl}+\delta_{il}\delta_{jk})$
and $\bH\in \HH^{4}$ is a cubic fourth-order harmonic tensor\footnote{\emph{i.e.} totally symmetric, $\bH=\bH^{s}$, and traceless, $\tr_{ij} \bH=0$.}. Here,
\begin{equation*}
  \lambda = \frac{1}{15}(2 \tr(\tr_{12} \bE)-\tr(\tr_{13} \bE))=\frac{1}{5}(E_{1111} - 2E_{1212} + 4E_{1122}),
\end{equation*}
and
\begin{equation*}
  \mu = \frac{1}{30}(- \tr(\tr_{12} \bE)+3\tr(\tr_{13} \bE))
  =\frac{1}{5}(E_{1111}+3E_{1212} - E_{1122}),
\end{equation*}
are the Lamé constants.

A fourth-order harmonic tensor
has  $9$ independent components. It can always be parameterized as (in Voigt notation, see \cite{DD2016,OKDD2018}),
\begin{equation}\label{eq:Hcubic}
  [\bH]=\begin{pmatrix} \Lambda_{2}+\Lambda_{3} & -\Lambda_{3} & -\Lambda_{2} & -X_{1} & Y_{1}+Y_{2} & -Z_{2} \\ -\Lambda_{3} & \Lambda_{3}+\Lambda_{1} & -\Lambda_{1} & -X_{2} & - Y_{1} & Z_{1}+Z_{2} \\ -\Lambda_{2} & -\Lambda_{1} & \Lambda_{1}+\Lambda_{2} & X_{1}+X_{2} & -Y_{2} & -Z_{1} \\ -X_{1} & -X_{2} & X_{1}+X_{2} & -\Lambda_{1} & -Z_{1} & -Y_{1} \\ Y_{1}+Y_{2} & - Y_{1} & -Y_{2} & -Z_{1} & -\Lambda_{2} & -X_{1} \\ -Z_{2} & Z_{1}+Z_{2} & -Z_{1} & -Y_{1} & -X_{1} & -\Lambda_{3}  \end{pmatrix}
  .
\end{equation}
If $\bH$ has \emph{at least} the cubic symmetry, there exists an orthonormal basis $(\ee_{i})$, in which (in Voigt notation, see \cite{AKP2014}):
\begin{equation}\label{eq:VoigtCubicH}
  [\bH]= \delta \left(
  \begin{array}{cccccc}
      8  & -4 & -4 & 0  & 0  & 0  \\
      -4 & 8  & -4 & 0  & 0  & 0  \\
      -4 & -4 & 8  & 0  & 0  & 0  \\
      0  & 0  & 0  & -4 & 0  & 0  \\
      0  & 0  & 0  & 0  & -4 & 0  \\
      0  & 0  & 0  & 0  & 0  & -4 \\
    \end{array}
  \right)_{(\ee_{1}, \ee_{2}, \ee_{3})}
  \!\!\!\!\!\!\!\!\!\!\!\! ,
  \qquad
  \delta =\frac{1}{4}\left(\mu-G\right)
  ,
\end{equation}
with $\delta=0$ when $\bH$ is isotropic and  $\delta\neq 0$ when it is cubic.

\begin{rem}
  The decomposition \eqref{eq:Eharm} of $\bE$ into $\lambda$, $\mu$, and $\bH$ (with $\bH$ cubic), is the so-called harmonic decomposition of a cubic elasticity tensor (see \cite{Bac1970,Cow1989}).
\end{rem}

The generalized Lamé constants $\lambda=\lambda(\bE), \mu=\mu(\bE)$ are two (linear) invariants of $\bE$. The scalar $\delta=\delta(\bE)$ is a (rational) invariant of the cubic elasticity tensor $\bE$. Indeed, one has then~\cite[Section 5.1]{AKP2014}:
\begin{equation}\label{eq:delta}
  \delta=\frac{J_{3}}{4J_{2}},
\end{equation}
where
\begin{equation}\label{eq:J2J3}
  J_{2} = \norm{\bH}^{2}=\tr (\tr_{13} \bH^{2}) = H_{ijkl}H_{ijkl},
  \quad
  \text{and}
  \quad
  J_{3} = \tr (\tr_{13} \bH^{3}) = H_{ijkl}H_{klpq}H_{pqij},
\end{equation}
are two polynomial invariants of $\bH$ (first introduced in~\cite{BKO1994}). The Euclidean squared norm of the cubic elasticity tensor $\bE$ is then
\begin{equation*}
  \norm{\bE}^{2} = 3 \left(3\lambda^{2} + 4 \lambda \mu + 8 \mu^{2}\right) + 480 \delta^{2}.
\end{equation*}
When evaluated on \eqref{eq:Hcubic}, the invariants $J_{2}$ and $J_{3}$ can be expressed as
\begin{align}\label{eq:J2detail}
  J_{2} & = 2 \Big(4 \Lambda_{1}^{2} + \Lambda_{1} \Lambda_{2} + \Lambda_{1} \Lambda_{3} + 4 \Lambda_{2}^{2}+\Lambda_{2} \Lambda_{3} + 4 \Lambda_{3}^{2} + 8 X_{1}^{2} + 4 X_{1} X_{2} + 4 X_{2}^{2}
  \\
  \nonumber
        & \quad +8 Y_{1}^{2}+4 Y_{1} Y_{2}+4 Y_{2}^{2}+8 Z_{1}^{2}+4 Z_{1} Z_{2}+4 Z_{2}^{2}\Big),
  \\
  \label{eq:J3detail}
  J_{3} & = \, 6 \Big(\Lambda_{1}^{2} \Lambda_{2} + \Lambda_{1} \Lambda_{2}^{2} + \Lambda_{3}^{2} (\Lambda_{1} + \Lambda_{2}) - 3 X_{1}^{2} (\Lambda_{1} + \Lambda_{3}) - 2 \Lambda_{3} X_{1} X_{2}
  \\
  \nonumber
        & \quad + 4 X_{1} (\Lambda_{2} X_{2}+(Y_{1}+Y_{2}) (Z_{1}+Z_{2}))+X_{2}^{2} (\Lambda_{2}+\Lambda_{3})+4 X_{2} (Z_{1} (Y_{1}+Y_{2})+Y_{1} Z_{2})
  \\
  \nonumber
        & \quad -3 \Lambda_{1} Y_{1}^{2}-3 \Lambda_{2} Y_{1}^{2}+\Lambda_{3} \left(\Lambda_{1}^{2}-\Lambda_{1} \Lambda_{2}+\Lambda_{2}^{2}+4 Y_{1} Y_{2}+Y_{2}^{2}-3 Z_{1}^{2}\right)-2 \Lambda_{1} Y_{1} Y_{2}
  \\
  \nonumber
        & \quad + \Lambda_{1} Y_{2}^{2} - 3 \Lambda_{2} Z_{1}^{2} + 4 \Lambda_{1} Z_{1} Z_{2} - 2 \Lambda_{2} Z_{1} Z_{2} + \Lambda_{1} Z_{2}^{2} + \Lambda_{2} Z_{2}^{2}\Big).
\end{align}

\begin{rem}\label{rem:normal-form-hooke}
  Given a cubic elasticity tensor $\bE^{*}=(E_{ijkl}^{*})$, expressed in an arbitrary basis, the calculation of its normal form $\bE=\bE_{\octa}$ (of Voigt representation~\eqref{eq:VoigtCubic}) is straightforward (using~\eqref{eq:VoigtCubicH} within~\eqref{eq:Eharm}). Indeed, the normal form~\eqref{eq:VoigtCubic} is recovered from the calculation of $\lambda$, $\mu$ and $\delta=J_{3}/4J_{2}$ by the above formulas with
  \begin{equation}\label{eq:CubicDirect}
    E_{1111}=2\mu + \lambda+ 8 \delta,
    \qquad
    E_{1122} =  \lambda-4 \delta,
    \qquad
    E_{1212}= \mu - 4 \delta,
    \qquad
  \end{equation}
  where the invariants $\lambda$, $\mu$, $J_{2}$, $J_{3}$ and $\delta$ are evaluated on $\bE^*$.
\end{rem}

The covariant characterization of the elasticity symmetry classes by polynomial equations (and inequalities) has been performed recently,
in~\cite[theorem 10.2]{OKDD2021}. The case of the cubic symmetry is recalled as theorem~\ref{thm:Ecubic} below. We denote by
\begin{equation*}
  \ba'=\ba-\frac{1}{3} \tr(\ba)\, \Id,
\end{equation*}
the deviatoric part of a second-order tensor $\ba$, and by $\bE^{s}$, the totally symmetric part of $\bE$, with components
\begin{equation*}
  E^{s}_{ijkl} = \frac{1}{3}(E_{ijkl}+E_{ikjl}+E_{iljk}).
\end{equation*}

\begin{thm}[Olive et al (2021)]\label{thm:Ecubic}
  Let $\bE$ be an elasticity tensor,
  \begin{equation*}
    \bd=\tr_{12} \bE
    \quad \text{and} \quad
    \bv=\tr_{13} \bE,
  \end{equation*}
  respectively, the dilatation and the Voigt second-order tensors,
  \begin{equation*}
    \lambda = \frac{1}{15}(2 \tr\bd-\tr\bv)
    \quad \text{and} \quad
    \mu = \frac{1}{30}(3\tr\bv - \tr\bd),
  \end{equation*}
  the Lamé constants,
  \begin{equation}\label{eq:H}
    \bH=\bE^{s}-(2\mu+ \lambda) \Id \odot \Id -
    \frac{2}{7}\Id \odot ( \bd^{\prime} +2  \bv^{\prime})
  \end{equation}
  and
  \begin{equation*}
    \bd_{2}=\bH\3dots \bH,
  \end{equation*}
  with components $(\bd_{2})_{ij}=H_{ipqr}H_{pqrj}$. Then, $\bE$ is cubic if and only if
  \begin{equation*}
    \bd'=\bv'=0,
    \qquad
    \bd_{2}'=0,
    \quad \text{and} \quad
    J_{2}=\tr \bd_{2} \neq 0.
  \end{equation*}
\end{thm}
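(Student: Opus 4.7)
The plan is to reduce the characterization to that of the closed cubic stratum in $\HH^{4}$, by means of the $\SO(3)$-irreducible harmonic decomposition of the elasticity space: $\Ela \simeq \RR^{2} \oplus \HH^{2} \oplus \HH^{2} \oplus \HH^{4}$. In this decomposition the two scalar parts are the Lamé constants $\lambda$ and $\mu$, the two $\HH^{2}$ parts are (up to normalization) the deviators $\bd'$ and $\bv'$, and the $\HH^{4}$ part is precisely the tensor $\bH$ of~\eqref{eq:H}. I shall then invoke the characterization recalled in the second example of~\autoref{sec:EL-method} (established as~\cite[Theorem~9.3]{OKDD2021}): a harmonic fourth-order tensor $\bH \in \HH^{4}$ is at least cubic if and only if $(\bH \3dots \bH)' = 0$.

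For the direct implication, assume $\bE$ is cubic. By~\eqref{eq:Eharm}, write $\bE = 2\mu \bI + \lambda \Id \otimes \Id + \bH$ with $\bH \in \HH^{4}$ cubic and nonzero. A short direct computation, using that $\bH$ is totally symmetric and traceless, gives $\bd = (3\lambda + 2\mu)\Id$ and $\bv = (\lambda + 4\mu)\Id$, so $\bd' = \bv' = 0$; substituting into~\eqref{eq:H} together with the identity $(\bI)^{s} = \Id \odot \Id$ recovers the $\bH$ of~\eqref{eq:Eharm}. Since $\bd_{2} = \bH \3dots \bH$ is a covariant of the cubic tensor $\bH$, it is an $\octa$-invariant element of $\Sym^{2}(\RR^{3})$; as $\HH^{2}$ contains no nonzero $\octa$-invariant, $\bd_{2}$ is a scalar multiple of $\Id$ and therefore $\bd_{2}' = 0$. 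Finally $J_{2} = \norm{\bH}^{2} > 0$ since $\bH \neq 0$.

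For the converse, the assumption $\bd' = \bv' = 0$ kills the two harmonic second-order parts of $\bE$ in its harmonic decomposition, so that $\bE = 2\mu \bI + \lambda \Id \otimes \Id + \bH$ for some $\bH \in \HH^{4}$; formula~\eqref{eq:H} then identifies this $\bH$ with the one of the theorem. The remaining assumptions $\bd_{2}' = 0$ and $J_{2} \neq 0$ are, by the cubic characterization in $\HH^{4}$ recalled above, exactly the conditions for $\bH$ to be cubic and nonzero. In a suitably chosen orthonormal basis $\bH$ therefore takes the normal form~\eqref{eq:VoigtCubicH}, so $\bE$ takes the normal form~\eqref{eq:VoigtCubic} and belongs to the (strict) cubic symmetry class, the condition $\bH \neq 0$ excluding isotropy.

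The only substantive step is the cubic characterization in $\HH^{4}$, which I invoke from~\cite[Theorem~9.3]{OKDD2021} rather than reprove. All remaining ingredients are routine: (i) verifying that~\eqref{eq:H} is indeed the projector onto the $\HH^{4}$ isotypic component of $\Ela$, a direct trace-removal computation relying on $(\bI)^{s} = \Id \odot \Id$ and on the tracelessness of $\bH$, and (ii) observing that $\HH^{2}$ admits no nonzero $\octa$-invariant, which is the standard branching of the spin-$2$ representation under the octahedral subgroup.
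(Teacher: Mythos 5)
The paper does not actually prove this statement: it is explicitly attributed to Olive et al.\ and recalled from \cite[Theorem~10.2]{OKDD2021} without an independent argument, so there is no ``paper's own proof'' to compare against. Your reduction, however, is the natural one and is essentially how Olive et al.\ themselves proceed. You correctly identify the harmonic decomposition $\Ela\simeq\RR^2\oplus\HH^2\oplus\HH^2\oplus\HH^4$, verify that $\bd'=\bv'=0$ kills the two $\HH^2$ isotypic components (your computations $\bd=(3\lambda+2\mu)\Id$, $\bv=(\lambda+4\mu)\Id$, and $(\bI)^s=\Id\odot\Id$ are all correct), and then transfer the problem to \cite[Theorem~9.3]{OKDD2021} on the closed cubic stratum of $\HH^4$, using $J_2\neq 0$ to exclude the isotropic point. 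The branching argument -- that $\HH^2$ restricted to $\octa$ contains no trivial summand, hence $\bd_2'=0$ whenever $\bH$ is $\octa$-invariant -- is the right way to get the forward implication without explicit coordinate computations, and the fact that $G_\bE = G_\bH$ when the $\HH^2$ parts vanish (since $\bI$ and $\Id\otimes\Id$ are isotropic) closes the converse. The only thing to be aware of is that the argument is a reduction, not a self-contained proof: the substantive content (the $\HH^4$ cubic characterization $g(\bH)=(\bH\3dots\bH)'=0$) is imported from the same reference the paper cites, so nothing circular or incomplete, but nothing is proven from first principles either -- which is entirely consistent with how the paper itself treats the statement.
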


\begin{rem}\label{rem:HrmDecompNorm2}
  The decomposition of any elasticity tensor provided by the above formulas,
  \begin{equation*}
    \bE=(\lambda, \mu, \bd', \bv', \bH),
  \end{equation*}
  into the harmonic components $\lambda, \mu\in \HH^0$, $\bd', \bv'\in \HH^{2}$ and $\bH\in \HH^{4}$, is the so-called harmonic decomposition of $\bE$~\cite{Bac1970,Spe1970,Cow1989}.
\end{rem}

\section{Cubic pair of elasticity-like tensors}
\label{sec:cubic-pair}

There exist constitutive laws (for instance, anisotropic elasto-plasticity \cite{Hil1948}) involving two fourth-order constitutive tensors. The question of the characterization of all the symmetry classes of a pair
\begin{equation*}
  (\bE, \bF) \in \Ela \times \Ela
\end{equation*}
of elasticity-like tensors seems to be an open one. Nevertheless, this question has a relatively simple answer in the cubic symmetry case, thanks to the harmonic decompositions of both tensors $\bE$ and $\bF$,
\begin{equation*}
  \bE=(\lambda, \mu, \bd', \bv', \bH)
  \quad \text{and} \quad \bF=(\ell, m, \be', \bw', \bK),
\end{equation*}
and by recalling that the symmetry group $G_{(\bE, \bF)}$ of the pair $(\bE, \bF)$ is the intersection of the symmetry groups of its harmonic components \cite{FV1996},
\begin{equation*}
  G_{(\bE, \bF)}=G_{\bE} \cap G_{\bF}=G_{\bd'}  \cap  G_{\bv'}  \cap  G_{\bH}  \cap  G_{\be'}  \cap  G_{\bw'}  \cap  G_{\bK}.
\end{equation*}
As an harmonic (deviatoric) cubic second-order  tensor is isotropic and therefore vanishes (so that $G_{(\bE, \bF)}=G_{\bH} \cap G_{\bK}$), and as the normal form of an harmonic cubic fourth-order  tensor is one-dimensional, the pair of elasticity-like fourth order tensors $(\bE, \bF)$ is cubic if and only if its harmonic second-order components vanish and its harmonic fourth-order components are cubic and proportional. By theorem \ref{thm:Ecubic} we get the following result.

\begin{thm}\label{thm:EFcubic}
  Let $\bE=(\lambda, \mu, \bd', \bv', \bH)\in \Ela$ and $\bF=(\ell, m, \be', \bw', \bK)\in \Ela$ be two elasticity-like fourth-order tensors, and
  \begin{equation*}
    \bd_{2}(\bH)=\bH\3dots \bH,
    \qquad
    \bd_{2}(\bK)=\bK\3dots \bK,
  \end{equation*}
  be the quadratic covariants of their respective harmonic fourth-order components $\bH$ and $\bK$. Then, the pair $(\bE, \bF)$ is cubic if and only if
  \begin{equation*}
    \bd'=\bv'=\be'=\bw'=0,
  \end{equation*}
  and either
  \begin{equation*}
    (a) \qquad \bd_{2}'(\bH)=0
    \quad \text{and} \quad
    \bH=k \bK\neq 0,
  \end{equation*}
  or
  \begin{equation*}
    (b) \qquad \bd_{2}'(\bK)=0
    \quad \text{and} \quad
    \bK=k \bH\neq 0,
  \end{equation*}
  with $k\in \RR$.
\end{thm}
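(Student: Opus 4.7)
The plan is to combine the harmonic decomposition of the pair with the observation, recalled in the paragraph preceding the theorem, that $G_{(\bE,\bF)}=G_{\bH}\cap G_{\bK}$ as soon as the deviatoric second-order components vanish, together with \autoref{thm:Ecubic} applied to each of $\bE$ and $\bF$ individually.

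\textbf{Forward direction.} Suppose the pair $(\bE,\bF)$ is at least cubic, so there exists a subgroup conjugate to the octahedral group $\octa$ fixing both $\bE$ and $\bF$, and in particular fixing all their harmonic components. I would first argue that each harmonic second-order component must vanish: the $\SO(3)$-representation $\HH^{2}$ is irreducible, and the only deviatoric symmetric second-order tensor admitting at least cubic symmetry is $0$ (a traceless symmetric matrix with cubic symmetry has three equal eigenvalues, hence is zero). This yields $\bd'=\bv'=\be'=\bw'=0$, which in turn gives the identity $G_{(\bE,\bF)}=G_{\bH}\cap G_{\bK}$ invoked above.

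For the fourth-order components, if $\bH\neq 0$ then $\bE$ is itself at least cubic, so \autoref{thm:Ecubic} applies to $\bE$ and yields $\bd_{2}'(\bH)=0$; symmetrically for $\bK$ and $\bF$ if $\bK\neq 0$. The main remaining step is the proportionality. I would choose a common natural cubic basis $(\ee_{i})$ adapted to a shared cubic subgroup of $G_{\bH}\cap G_{\bK}$: in this basis, both $\bH$ and $\bK$ must take the normal form~\eqref{eq:VoigtCubicH}, which is parameterized by a single scalar $\delta$. Hence $\bH$ and $\bK$ are collinear, and, depending on which of them is non-zero and chosen as reference, we land in case (a) or case (b).

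\textbf{Converse direction.} Assume the stated conditions, say (a). Since $\bH\neq 0$ and $\bd_{2}'(\bH)=0$, \autoref{thm:Ecubic} gives that $\bE$ is cubic, with a natural cubic basis $(\ee_{i})$ in which $\bH$ takes the form~\eqref{eq:VoigtCubicH}. The relation $\bK=\tfrac{1}{k}\bH$ (equivalently $\bH=k\bK$) then forces $\bK$ to share the same normal form in the same basis, so $G_{\bK}\supset\octa$ in that basis as well. Combined with the vanishing of all four second-order deviators, this gives $G_{(\bE,\bF)}=G_{\bH}\cap G_{\bK}\supset \octa$, so the pair is at least cubic. Case (b) is handled by interchanging the roles of $\bH$ and $\bK$.

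\textbf{Main obstacle.} The only non-routine step is establishing that two non-zero fourth-order harmonic tensors admitting at least cubic symmetry \emph{with respect to the same cubic basis} are necessarily proportional. While this is a direct consequence of the one-dimensionality of the cubic-invariant subspace of $\HH^{4}$ evidenced by the normal form~\eqref{eq:VoigtCubicH}, the subtle point is to verify that the assumption that the \emph{pair} is cubic (rather than each tensor separately) does provide a \emph{common} such basis for both $\bH$ and $\bK$; this is precisely where the intersection formula $G_{(\bE,\bF)}=G_{\bH}\cap G_{\bK}$ is essential, and where the hypothesis could fail if applied componentwise without coherence of the bases.
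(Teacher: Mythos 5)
Your proof is correct and follows essentially the same route as the paper's own argument, which is compressed into the paragraph immediately preceding the theorem: decompose $G_{(\bE,\bF)}$ as the intersection $\bigcap G_{\text{component}}$ of the isotropy groups of the harmonic components, observe that a deviatoric second-order tensor fixed by $\octa$ must vanish (so that $G_{(\bE,\bF)}=G_{\bH}\cap G_{\bK}$), invoke the one-dimensionality of the $\octa$-fixed subspace of $\HH^{4}$ to get proportionality in a common cubic basis, and finally translate cubicity of the fourth-order components into the polynomial condition $\bd_2'=0$, $\bd_2 \ne 0$ via \autoref{thm:Ecubic}. You have simply filled in with a bit more detail the steps the paper leaves implicit (identifying the common natural basis and closing the converse by ruling out the isotropic overshoot since $\bH\ne 0$ forces $G_{(\bE,\bF)}\subset G_{\bE}\ne\SO(3)$, with $\octa$ maximal among proper closed subgroups containing it). The only place where your phrasing and the theorem's literal wording do not perfectly align is the degenerate case where exactly one of $\bH,\bK$ vanishes: as written, both (a) and (b) implicitly require $\bH\ne 0$ and $\bK\ne 0$, yet your ``depending on which of them is non-zero'' suggests you want $k=0$ allowed in one of the two cases; this minor imprecision is already present in the paper's formulation (which speaks of the two fourth-order components being ``cubic and proportional'') and is not a gap specific to your argument.
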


\section{Distance to cubic elasticity as a quadratic optimization problem}
\label{sec:distance-cubic-elasticity}

It is possible to reformulate the  distance to cubic symmetry problem into a \emph{quadratic optimization problem}
\begin{equation*}
  \min_{\bE} \norm{\bE_{0}-\bE}^{2} \quad \text{with} \;  \bE \; \text{at least cubic},
\end{equation*}
since the function to be minimized $\norm{\bE_{0}-\bE}^{2}$ and the constraint ``$\bE$ at least cubic''
(equivalent to $\bd'=\bv'=0$ and $\bd_{2}'=0$ by theorem \ref{thm:Ecubic}) are expressed by quadratic polynomials in $\bE$.
This 21-dimensional problem can be further reduced to a 9-dimensional optimization problem in the harmonic component $\bH\in \HH^{4}$ of $\bE$ only.

To achieve this reduction, we perform the harmonic decompositions of both the given and the sought tensors $\bE_0$ and $\bE$ (see remark \ref{rem:HrmDecompNorm2} and \eqref{eq:Eharm}),
\begin{equation*}
  \bE_{0}=(\lambda_{0}, \mu_{0}, \bd_{0}', \bv_{0}', \bH_{0})
  \quad \text{and} \quad
  \bE=(\lambda, \mu, 0, 0, \bH).
\end{equation*}
Then, using the formula
\begin{equation}\label{eq:normE2}
  \norm{\bE}^{2} =3 \left(3 \lambda ^{2}+4 \lambda  \mu +8 \mu ^{2}\right) +  \frac{2}{21} \norm{\bd^{\prime}+2 \bv^{\prime}}^{2} +\frac{4}{3}\norm{\bd^{\prime}- \bv^{\prime}}^{2}+ \norm{\bH}^{2},
\end{equation}
for the Euclidean squared norm $\norm{\bE}^{2}:=\bE::\bE$
of an elasticity tensor
$\bE=(\lambda, \mu, \bd', \bv', \bH) $,
we get
\begin{align*}
  f(\bE)=\norm{\bE_{0}-\bE}^{2} & =3 \left(3 (\lambda_{0}-\lambda) ^{2}+4 (\lambda_{0}-\lambda) (\mu_{0}-\mu) +8 (\mu_{0}-\mu) ^{2}\right)
  \\
                                & \qquad
  +  \frac{2}{21} \norm{\bd_{0}^{\prime}+2 \bv_{0}^{\prime}}^{2} +\frac{4}{3}\norm{\bd_{0}^{\prime}- \bv_{0}^{\prime}}^{2}+ \norm{\bH_{0}-\bH}^{2},
\end{align*}
whose minimum for $\bE=\bE^*$ cubic is obtained for $\lambda=\lambda_{0}$, $\mu=\mu_{0}$ and $\bH$ cubic. We have therefore
\begin{equation*}
  \bE^*=2\mu_{0}\, \mathbf{I} + \lambda_{0} \Id \otimes \Id+\bH^*,
\end{equation*}
with $\bH^*\in \HH^{4}$ solution of the quadratic optimization problem
\begin{equation}\label{eq:PBenH}
  \min_{\bH} \norm{\bH_{0}-\bH}^{2} \quad \text{with} \quad g=\bd_{2}'=0,
\end{equation}
and the five scalar constraints in \eqref{eq:constrained-problem}
\begin{equation*}
  (\bd_{2}')_{11}=0, \quad (\bd_{2}')_{22}=0, \quad (\bd_{2}')_{12}=0, \quad (\bd_{2}')_{13}=0, \quad \text{and} \quad (\bd_{2}')_{23}=0,
\end{equation*}
are indeed quadratic in $\bH$.

The optimum is cubic if $\bH^*\neq 0$, with then the \emph{distance} and the \emph{relative distance} to cubic symmetry respectively
equal to
\begin{equation*}
  d(\bE_{0}, \text{cubic symmetry})=\norm{\bE_{0}-\bE^{*}}=\sqrt{
    \frac{2}{21} \norm{\bd_{0}^{\prime}+2 \bv_{0}^{\prime}}^{2} +\frac{4}{3}\norm{\bd_{0}^{\prime}- \bv_{0}^{\prime}}^{2}+ \norm{\bH_{0}-\bH^{*}}^{2}},
\end{equation*}
and
\begin{equation*}
  \frac{d(\bE_{0}, \text{cubic symmetry})}{\norm{\bE_{0}}}=
  \frac{\norm{\bE_{0}-\bE^{*}}}{\norm{\bE_{0}}}.
\end{equation*}

In order to apply the Euler--Lagrange method to our constrained optimization problem \eqref{eq:PBenH}, we have to check (see  \autoref{sec:EL-method}) that the smooth mapping
\begin{equation*}
  g:\; \HH^{4} \to \HH^{2} ,
  \qquad
  \bH \mapsto \bd_{2}'=(\bH\3dots \bH)',
\end{equation*}
is a submersion for all cubic tensors $\bH\in \HH^{4}$ (\textit{i.e.}, that the Jacobian matrix $T_{\bH}g: \HH^{4} \to \HH^{2}$ is of maximum rank 5, for each cubic tensor $\bH$). This is indeed the case. To show this, we observe that the mapping $\bH \mapsto g(\bH) = (\bH\3dots \bH)'$ is covariant, meaning that
\begin{equation*}
  g(Q\star \bH) = Q\star g(\bH)
\end{equation*}
for every rotation $Q$. Therefore, the rank of $T_{\bH}g$ is equal to the rank of $T_{Q \star \bH}g$ for every rotation $Q$ and it is enough to compute this rank when $\bH$ is the cubic normal form~\eqref{eq:VoigtCubicH}, which is 5. Note however that $g$ is not a submersion when $\bH=0$ (\emph{i.e.}, when $\bH$ is isotropic).

The Euler--Lagrange method further reduces the distance problem
(at given $\bH_{0}$),
\begin{equation*}
  \min_{g(\bH)=0} f(\bH),
  \qquad
  f(\bH)=\norm{\bH_{0}-\bH}^{2},
  \qquad
  g(\bH)=\bd'_{2},
\end{equation*}
to the determination of the critical points of the polynomial function
\begin{equation*}
  F(\bH, \blambda)=\norm{\bH_{0}-\bH}^{2}+\blambda:g(\bH),
\end{equation*}
with $\bH\in \HH^{4}$ an harmonic fourth-order tensor and where the Lagrange multiplier $\blambda\in \HH^{2}$ is an harmonic (deviatoric) second-order tensor.

The differential of $F$ with respect to $\bH$ is given by
\begin{align*}
  \dd F.\delta \bH & =2(\bH-\bH_{0})::\delta \bH+\blambda:(\bH\3dots \delta \bH+\delta \bH \3dots \bH) \\
                   & =2(\bH-\bH_{0})::\delta \bH+2\, \bS(\blambda) :: \delta\bH ,
\end{align*}
thanks to the equalities $\blambda:\bd'_{2}=\blambda:\bd_{2}=\blambda:(\bH \3dots  \bH)$, where
\begin{equation*}
  \bS(\blambda) :=\frac{1}{2} \grad_{\bH} (\blambda:\bd_{2}') = (\bH\cdot \blambda)^{s \prime} \; \in \HH^{4},
\end{equation*}
is the fourth-order harmonic part of the tensor $(\bH\cdot \blambda)^{s}$ (of components $H_{ijkp}\lambda_{pl}$).
It can be computed using for example Eq. \eqref{eq:H}, or using directly the harmonic decomposition of totally symmetric tensors \cite{Spe1970}\cite[Section 2.2]{OKDD2018}, with here
\begin{equation}\label{eq:trS}
  \tr (\bH\cdot \blambda)^{s}= \frac{1}{2}\, \bH:\blambda
  \quad \text{and} \quad
  \tr \tr \left( \bS(\blambda)\right)=\tr \tr (\bH\cdot \blambda)^{s}=0 ,
\end{equation}
so that (introducing the symmetrized tensor product $\odot$)
\begin{align}\label{eq:Sprim}
  \bS(\blambda)=(\bH\cdot \blambda)^{s}- \frac{3}{7} \Id \odot (\bH:\blambda).
\end{align}

Therefore, the Euler--Lagrange equations $\displaystyle\frac{\partial F}{\partial \bH}=0$ and $\displaystyle\frac{\partial F}{\partial \blambda}=0$ reduce to the system of equations
\begin{align}\label{eq:systeme}
  \begin{cases}
    \bH-\bH_{0}+\bS(\blambda)=0 & \textit{(9 scalar equations)}, \\
    \bd'_{2}=0                  & \textit{(5 scalar equations)},
  \end{cases}
\end{align}
in the 9 independent components $H_{ijkl}$ of $\bH\in \HH^{4}$ and the 5 independent components $\lambda_{ij}$ of $\blambda\in \HH^{2}$.

The system \eqref{eq:systeme} can be further simplified by extracting from the equality $\bH-\bH_{0}+\bS(\blambda)=0$ some linear equations in $\bH$.

\begin{lem}\label{lem:sys}
  The Euler--Lagrange system \eqref{eq:systeme} implies that
  \begin{equation}\label{eq:Reducesys}
    \begin{cases}
      \bH\3dots \bH_{0}-\bH_{0}\3dots \bH=0, \qquad & \text{(3 linear scalar equations)}    \\
      (\bH-\bH_{0})::\bH =0, \qquad                 & \text{(1 quadratic scalar equation)}  \\
      \bd'_{2}=0.  \qquad                           & \text{(5 quadratic scalar equations)}
    \end{cases}
  \end{equation}
\end{lem}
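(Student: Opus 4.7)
The third line of~\eqref{eq:Reducesys} is just the constraint equation from~\eqref{eq:systeme}, so the real task is to extract the three linear and the one quadratic scalar equation from the Euler--Lagrange relation $\bH - \bH_{0} + \bS(\blambda) = 0$. My plan is to rewrite this as $\bH_{0} - \bH = \bS(\blambda)$ and contract with $\bH$ in two different ways, each time eliminating the Lagrange multiplier $\blambda$ thanks to the constraint $\bd'_{2}=0$: once by the full quadruple contraction $::$, which produces the scalar equation, and once by the triple contraction $\3dots$, whose antisymmetric part produces the three linear equations.

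For the quadratic scalar equation, the key observation is that $2\bS(\blambda) = \grad_{\bH}(\blambda : \bd_{2})$ (since $\blambda$ is deviatoric, $\blambda : \bd'_{2} = \blambda : \bd_{2}$), and the function $\bH \mapsto \blambda : \bd_{2}(\bH) = \lambda_{ij}H_{ipqr}H_{pqrj}$ is homogeneous of degree two. Euler's identity for quadratic functions then gives $\bH :: \bS(\blambda) = \blambda : \bd_{2}$. Under the constraint $\bd'_{2} = 0$, one has $\bd_{2} = \tfrac{1}{3}J_{2}\Id$ (pure spherical), and since $\blambda \in \HH^{2}$ is traceless we conclude $\blambda : \bd_{2} = 0$, hence $\bS(\blambda) :: \bH = 0$. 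Taking the $::$-product of $\bH - \bH_{0} + \bS(\blambda) = 0$ with $\bH$ then yields the desired $(\bH - \bH_{0}) :: \bH = 0$.

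For the three linear equations, substituting the Euler--Lagrange relation gives $\bH \3dots \bH_{0} - \bH_{0}\3dots \bH = \bH\3dots \bS(\blambda) - \bS(\blambda)\3dots \bH$. A direct index manipulation, using that $\bH$ and $\bS(\blambda)$ are both totally symmetric, shows that $(\bH\3dots \bS(\blambda))^{T} = \bS(\blambda)\3dots \bH$; so the right-hand side is twice the antisymmetric part of the $3\times 3$ matrix $\bH \3dots \bS(\blambda)$, which indeed has exactly $3$ independent components. To show that this antisymmetric part vanishes on the constraint, I would invoke the $\SO(3)$-covariance of the map $g : \bH \mapsto \bd'_{2}(\bH)$. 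Differentiating $g(Q\star\bH) = Q\star g(\bH)$ at the identity in the direction of a skew-symmetric $A$ yields $T_{\bH}g(A\bullet \bH) = [A,\bd'_{2}]$, which vanishes when $\bd'_{2}=0$. Pairing with the Lagrange multiplier via $\blambda : T_{\bH} g(\delta\bH) = 2\,\bS(\blambda) :: \delta\bH$ then gives $\bS(\blambda) :: (A \bullet \bH) = 0$ for every skew-symmetric $A$. Expanding the infinitesimal rotation $A\bullet \bH$ and using total symmetry of $\bS(\blambda)$ and $\bH$ to collapse the four resulting terms, this identity becomes $A : (\bS(\blambda)\3dots \bH) = 0$ for all skew $A$; hence $\bS(\blambda)\3dots \bH$ is symmetric, and consequently $\bS(\blambda)\3dots\bH = (\bS(\blambda)\3dots\bH)^{T} = \bH\3dots \bS(\blambda)$, so the difference vanishes.

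The main obstacle is precisely this last step: the identity $\bH\3dots \bS(\blambda) = \bS(\blambda)\3dots \bH$ is not a pure tensor identity but genuinely requires the constraint $\bd'_{2} = 0$. The covariance argument above is the cleanest route, but the statement can also be checked by brute force, expanding $\bS(\blambda) = (\bH \cdot \blambda)^{s} - \tfrac{3}{7}\Id\odot(\bH:\blambda)$ in components: the tracelessness of $\bH$ kills the contributions coming from $\Id\odot$, a relabelling of dummy indices shows that the symmetrized part involving the contraction pattern $H_{\alpha\beta\gamma\delta}H_{\alpha\beta\epsilon\zeta}$ is symmetric in the two free indices, and the residual obstruction then reduces to a multiple of the commutator $[\bd_{2},\blambda]$, which in turn vanishes once $\bd_{2} = \tfrac{1}{3}J_{2}\Id$.
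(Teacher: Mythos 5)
Your proof is correct, and for the scalar equation $(\bH-\bH_0)::\bH=0$ it is essentially the paper's argument (Euler's identity for the quadratic function whose gradient is $\bS(\blambda)$, combined with $\blambda:\bd_2=0$ on the constraint). Where you genuinely diverge is in the derivation of the three antisymmetric equations. The paper computes $\bS(\blambda)\3dots\bH$ and $\bH\3dots\bS(\blambda)$ explicitly from the formula $\bS(\blambda)=(\bH\cdot\blambda)^{s}-\tfrac{3}{7}\Id\odot(\bH:\blambda)$, obtaining $\tfrac{1}{4}\blambda\cdot\bd_2 + \tfrac{3}{4}\bc - \tfrac{3}{14}\bH^2:\blambda$ and $\tfrac{1}{4}\bd_2\cdot\blambda + \tfrac{3}{4}\bc - \tfrac{3}{14}\bH^2:\blambda$ respectively; the difference then reduces to $\tfrac{1}{4}[\blambda,\bd_2]$, which vanishes when $\bd_2$ is spherical. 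You instead differentiate the covariance relation $g(Q\star\bH)=Q\star g(\bH)$ along an infinitesimal rotation $A$, obtaining $T_{\bH}g(A\bullet\bH)=[A,\bd_2']=0$ on the constraint, and pair with $\blambda$ to conclude that $\bS(\blambda)\3dots\bH$ has no skew part. Both routes land on the same identity $\bS(\blambda)\3dots\bH=\bH\3dots\bS(\blambda)$, but yours is cleaner and exposes the geometric reason it holds: the first Euler--Lagrange equation forces $\bH_0-\bH$ to lie in the image of $T_{\bH}g^{*}$, and the antisymmetric equations are exactly the statement that $\bH_0-\bH$ is orthogonal to the tangent space of the $\SO(3)$-orbit through $\bH$, which is automatically contained in $\ker T_{\bH}g$ by covariance. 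The paper's explicit computation, on the other hand, is self-contained and yields the intermediate formulas for $\bS(\blambda)\3dots\bH$ as a by-product, which could be reused elsewhere. Your brute-force alternative in the last paragraph is closer in spirit to what the paper does, and you correctly identify that the residual obstruction is the commutator $[\bd_2,\blambda]$.
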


\begin{proof}
  By contracting three times the first equation $\bH-\bH_{0}+\bS(\blambda)=0$ in~\eqref{eq:systeme} with $\bH$ on the right
  and then on the left, we get
  \begin{subequations}
    \begin{align}
      \bH\3dots \bH-\bH_{0}\3dots \bH+\bS(\blambda)\3dots \bH & =0, \label{eq:24a}  \\
      \bH\3dots \bH-\bH\3dots \bH_{0}+\bH\3dots \bS(\blambda) & =0 . \label{eq:24b}
    \end{align}
  \end{subequations}
  By \eqref{eq:Sprim} and some calculations, we have
  \begin{equation*}
    \bS(\blambda)\3dots \bH = (\bH\cdot \blambda)^{s}\3dots \bH - \frac{3}{7} \left(\Id \odot (\bH:\blambda)\right)\3dots \bH = \frac{1}{4} \blambda\cdot \bd_{2} + \frac{3}{4} \bc-\frac{3}{14} \bH^2:\blambda
  \end{equation*}
  and
  \begin{equation*}
    \bH\3dots\bS(\blambda) = \bH \3dots (\bH\cdot \blambda)^{s} - \frac{3}{7} \bH \3dots \left(\Id \odot (\bH:\blambda)\right) = \frac{1}{4}  \bd_{2} \cdot \blambda + \frac{3}{4} \bc-\frac{3}{14} \bH^2:\blambda
  \end{equation*}
  where both $\bc$ and $\bH^{2}:\blambda=\bH:\bH:\blambda$ are symmetric second-order tensors with components
  \begin{equation*}
    c_{ij} = H_{ipqr}H_{jpqs} \lambda_{rs},
    \qquad
    (\bH^2:\blambda)_{ij} = H_{ijkl}H_{klmn}\lambda_{mn}.
  \end{equation*}
  If $\bH$ is at least cubic, then, $\bd_{2}'=0$ and $\blambda\cdot \bd_{2}=\bd_{2} \cdot \blambda$ is symmetric. Thus
  \begin{equation*}
    \bS(\blambda)\3dots \bH = \bH\3dots\bS(\blambda),
  \end{equation*}
  and, substracting \eqref{eq:24b} from \eqref{eq:24a}, we get
  \begin{equation*}
    \bH\3dots \bH_{0}-\bH_{0}\3dots \bH=0.
  \end{equation*}

  The second equation in~\eqref{eq:Reducesys} is obtained, by applying the Euler lemma on homogeneous functions to the quadratic function
  \begin{equation*}
    \bH \mapsto \frac{1}{2} \blambda : \bd_{2}'(\bH),
  \end{equation*}
  whose gradient is $\bS(\blambda)$. We get
  \begin{equation*}
    \bS(\blambda)::\bH = \frac{1}{2} \left(\grad_{\bH} (\blambda: \bd_{2}')\right)::\bH = \blambda: \bd_{2}'(\bH) = 0,
  \end{equation*}
  for each tensor $\bH$ which satisfies $\bd_{2}'(\bH)=0$. Therefore, contracting four times the first equation $\bH-\bH_{0}+\bS(\blambda)=0$ in~\eqref{eq:systeme} with $\bH$, we obtain the second equation of~\eqref{eq:Reducesys}, $(\bH-\bH_{0})::\bH=0$.
\end{proof}

\section{Numerical application -- Distance to cubic elasticity}
\label{sec:num-cubic-elasticity}

Let us now apply the Euler--Lagrange method to the problem of determining the distance
\begin{equation*}
  d(\bE_{0}, \text{cubic symmetry})=\min_{\bE \, \text{cubic}} \norm{\bE_{0}-\bE}=\norm{\bE_{0}-\bE^*},
\end{equation*}
of an experimental tensor $\bE_{0}$ to the cubic symmetry closed stratum. In our application the tensor $\bE_{0}$, taken from \cite{FGB1998} (refer to \cite{KRG1971,AHR1991,Art1993,FBG1996,Del2005,BAA2006} for measurements), is the elasticity tensor of a Nickel-based single crystal superalloy. In Voigt notation:
\begin{equation}\label{eq:E0}
  [\bE_{0}]=
  \begin{pmatrix}
    243 & 136 & 135 & 22 & 52 & -17 \\ 136 & 239 & 137 & -28 & 11 & 16 \\ 135 & 137 & 233 & 29 & -49 & 3 \\ 22 & -28 & 29 & 133 & -10 & -4 \\ 52 & 11 & -49 & -10 & 119 & -2 \\ -17 & 16 & 3 & -4 & -2 & 130
  \end{pmatrix} \;\text{ GPa},
  \qquad
  \norm{\bE_{0}}=713.41 \;\text{ GPa}.
\end{equation}
It can be checked (by \cite[Theorem 10.2 ]{OKDD2021}, see also \cite{FGB1998}) that the tensor $\bE_{0}$ is triclinic (with no material symmetry), even if it corresponds to a material with a so-called cubic $\gamma/\gamma'$ microstructure \cite{FS1987,PT2006,Ree2006}.

Using the formulas of theorem \ref{thm:Ecubic} we obtain the harmonic components of  $\bE_{0}$,
\begin{equation*}
  \lambda_{0}=\frac{1583}{15}\;  \text{ GPa},
  \qquad
  \mu_{0}=\frac{1453}{15}\;  \text{ GPa},
\end{equation*}
\begin{equation*}
  \bd_{0}'=\left(
  \begin{array}{ccc}
      \frac{11}{3} & 2           & 14            \\
      2            & \frac{5}{3} & 23            \\
      14           & 23          & -\frac{16}{3} \\
    \end{array}
  \right) \text{ GPa},
  \qquad
  \bv_{0}'=\left(
  \begin{array}{ccc}
      -1  & -11 & -1 \\
      -11 & 9   & -1 \\
      -1  & -1  & -8 \\
    \end{array}
  \right)
  \text{ GPa},
\end{equation*}
and, by \eqref{eq:H} (in Voigt notation),
\begin{equation}\label{eq:H0}
  [\bH_{0}]=\frac{1}{35}\left(
  \begin{array}{cccccc}
      -1986 & 1093  & 893   & 175   & 1760  & -495 \\
      1093  & -2306 & 1213  & -1085 & 15    & 660  \\
      893   & 1213  & -2106 & 910   & -1775 & -165 \\
      175   & -1085 & 910   & 1213  & -165  & 15   \\
      1760  & 15    & -1775 & -165  & 893   & 175  \\
      -495  & 660   & -165  & 15    & 175   & 1093 \\
    \end{array}
  \right)\text{ GPa}.
\end{equation}

The cost function $f=\norm{\bH_{0}-\bH}^{2}$ to minimize can then be expressed as (in GPa$^{2}$)
\begin{align*}
  f(\xx)= &
  8 \Lambda_{1}^{2}+2 \Lambda_{1} \Lambda_{2}+2 \Lambda_{1} \Lambda_{3}+668 \Lambda_{1}+8 \Lambda_{2}^{2}+2 \Lambda_{2} \Lambda_{3}+540 \Lambda_{2}+8 \Lambda_{3}^{2}+620 \Lambda_{3}
  \\
          & +16 X_{1}^{2}+8 X_{1} (X_{2}-11)+8 X_{2}^{2}-456 X_{2}+16 Y_{1}^{2}+8 Y_{1} Y_{2}-392 Y_{1}
  \\
          & +8 Y_{2}^{2}-808 Y_{2}+16 Z_{1}^{2}+8 Z_{1} Z_{2}-264 Z_{1}+8 Z_{2}^{2}-264 Z_{2}+ \frac{2026042}{35}
\end{align*}
in the variable
\begin{equation*}
  \xx=(X_{1}, X_{2}, Y_{1}, Y_{2}, Z_{1}, Z_{2}, \Lambda_{1}, \Lambda_{2}, \Lambda_{3}).
\end{equation*}
if the parameterization \eqref{eq:Hcubic} is used for $\bH$.

In terms of components, and according to the expression \eqref{eq:E0} for the considered material, the system of equations of lemma \ref{lem:sys} is constituted
\begin{itemize}
  \item[(a)]  \emph{of the three scalar equations,}
    \begin{align*}
      X_{1}= & \frac{1515991 \Lambda_{1}+6907074 \Lambda_{2}+2816520 \Lambda_{3}+4774213 Y_{2}+1319317 Z_{1}+3827136 Z_{2}}{2851559},
      \\
      X_{2}= & \frac{-2752251 \Lambda_{1}-5474665 \Lambda_{2}-1823999 \Lambda_{3}-3665127 Y_{2}+1198746 Z_{1}-1655027 Z_{2}}{2851559},
      \\
      Y_{1}= & \frac{-1401385 \Lambda_{1}-23691851 \Lambda_{2}-1939864 \Lambda_{3}-15828579 Y_{2}+4529623 Z_{1}+4531405 Z_{2}}{8554677},
    \end{align*}
    which correspond to the linear equation $\bH\3dots \bH_{0}-\bH_{0}\3dots \bH=0$,

  \item[(b)] \emph{of the scalar equation,}
    \begin{multline*}
      (\bH-\bH_{0})::\bH= 8\Lambda_1^2+2\Lambda_1\Lambda_2+2\Lambda_1\Lambda_3+8\Lambda_2^2+2\Lambda_2\Lambda_3+8\Lambda_3^2+16X_1^2+8X_1X_2+8X_2^2+16Y_1^2+8Y_1Y_2\\+8Y_2^2+16Z_1^2+8Z_1Z_2+8Z_2^2+334\Lambda_1+270\Lambda_2+310\Lambda_3-44X_1-228X_2-196Y_1-404Y_2-132Z_1-132Z_2
    \end{multline*}

  \item[(c)] \emph{and of the 5 equations} $g_{ij}=(\bd_{2})'_{ij}=0$, with
    \begin{align*}
      g_{11}=(\bd_{2}')_{11}= &
      \frac{2}{3} \big(-4 \Lambda_{1}^{2}-\Lambda_{1} \Lambda_{2}-\Lambda_{1} \Lambda_{3}+2 \Lambda_{2}^{2}+2 \Lambda_{2} \Lambda_{3}+2 \Lambda_{3}^{2}+X_{1}^{2}-4 X_{1} X_{2}-4 X_{2}^{2}
      \\
      \nonumber
                              & \qquad +Y_{1}^{2}+5 Y_{1} Y_{2}+2 Y_{2}^{2}-2 Z_{1}^{2}-Z_{1} Z_{2}+2 Z_{2}^{2}\big),
      \\
      g_{22}=(\bd_{2}')_{22}= & -\frac{2}{3} \big(-2 \Lambda_{1}^{2}+\Lambda_{1} \Lambda_{2}-2 \Lambda_{1} \Lambda_{3}+4 \Lambda_{2}^{2}+\Lambda_{2} \Lambda_{3}-2 \Lambda_{3}^{2}+2 X_{1}^{2}+X_{1} X_{2}-2 X_{2}^{2}
      \\
      \nonumber
                              & \qquad -Y_{1}^{2}+4 Y_{1} Y_{2}+4 Y_{2}^{2}-Z_{1}^{2}-5 Z_{1} Z_{2}-2 Z_{2}^{2}\big),
      \\
      g_{12}=(\bd_{2}')_{12}= & 3 X_{1} Y_{1} + 3 X_{2} Y_{1} - 4 X_{1} Y_{2} - X_{2} Y_{2} + 4 Z_{1}\Lambda_{1} +
      Z_{2}\Lambda_{1} + 3 Z_{1}\Lambda_{2} -
      Z_{2}\Lambda_{2} - 2 Z_{1}\Lambda_{3},
      \\
      g_{13}=(\bd_{2}')_{13}= & 3 X_{1} (Z_{1} + Z_{2}) - X_{2} (4 Z_{1} + Z_{2}) + 3 Y_{1}\Lambda_{1} -
      Y_{2}\Lambda_{1} - 2 Y_{1}\Lambda_{2} +
      4 Y_{1}\Lambda_{3} + Y_{2}\Lambda_{3},
      \\
      g_{23}=(\bd_{2}')_{23}= & 3 Y_{1} Z_{1} + 3 Y_{2} Z_{1} - 4 Y_{1} Z_{2} - Y_{2} Z_{2} - 2 X_{1}\Lambda_{1} +
      4 X_{1}\Lambda_{2} + X_{2}\Lambda_{2} +
      3 X_{1}\Lambda_{3} - X_{2}\Lambda_{3}.
    \end{align*}
\end{itemize}

Using the first three linear equations (of point (a)), we further reduce the system to 6 equations
\begin{equation*}
  g_{ij}=0
  \quad
  \text{and}
  \quad
  g_6=(\bH-\bH_{0})::\bH= 0,
\end{equation*}
quadratic in the 6 variables $Y_{2}, Z_{1}, Z_{2}, \Lambda_{1}, \Lambda_{2}, \Lambda_{3}$, and which can be solved thanks to the determination of a Gröbner basis $\mathrm{GB}$, by symbolic computation using Mathematica software\footnote{by the command $\mathrm{GB}=\mathbf{GroebnerBasis}\left[\set{g11, g22, g12, g13, g23, g6}, \set{Y2, Z1, Z2, \Lambda{1}, \Lambda{2}, \Lambda{3}} \right]$, where by default the lexicographic elimination order is used.}. We take advantage of the fact that the material parameters (here the components of $\bE_{0}$), are measured with only a few significant digits to work with rational coefficients polynomials. This point is of main importance in the resolution of a system of polynomial equations by the obtention of a Gröbner basis (see remark \ref{rem:coeffQ} of the Appendix).
The result is a set $\mathrm{GB}=\set{\mathrm{GB}_{1},\dotsc , \mathrm{GB}_{32} }$ of $32$ polynomials $\mathrm{GB}_n$ (unfortunately too lengthy to be given) in the variables $Y_{2}, Z_{1}, Z_{2}, \Lambda_{1}, \Lambda_{2}, \Lambda_{3}$, and which vanishes if and only if the initial (polynomial) system \eqref{eq:Reducesys} is satisfied.

In the present application, the first polynomial of the Gröbner basis $\mathrm{GB}_{1}$ is found to be function of $\Lambda_{3}$ only, $\mathrm{GB}_{2}$ function of $\Lambda_{2}$ and $\Lambda_{3}$ (but linear in $\Lambda_{2}$), and so on, up to  $\mathrm{GB}_{32}$ function of all the variables (but linear in $Y_{2}$), as in~\eqref{eq:GB-system-1} of \autoref{sec:groebner} with $n=6$ and $x_{6}=\Lambda_{3}$. Solving $\mathrm{GB}_{1}(\Lambda_{3})=0$ (using the command $\mathbf{NSolve}[\mathrm{GB}[[1]] == 0, \Lambda{3}, \mathbf{WorkingPrecision} \rightarrow 50]$), we get either $\Lambda_{3}=\Lambda_{3}^{(0)}=0$ (leading to the isotropic solution $\bH^{(0)}=0$) or $\Lambda_{3}$ is a real root of a polynomial of degree 14, which has 8 non-zero real roots (in practice determined with a 50 significant digits precision),
\begin{align*}
  \Lambda_{3}^{(1)} & = -38.908854, & \Lambda_{3}^{(2)} & = -10.425971, & \Lambda_{3}^{(3)} & = -8.424314 , & \Lambda_{3}^{(4)} & = -6.225368,
  \\
  \Lambda_{3}^{(5)} & = -3.194952 , & \Lambda_{3}^{(6)} & = -3.056232,  & \Lambda_{3}^{(7)} & = 1.745698,   & \Lambda_{3}^{(8)} & = 13.541284.
\end{align*}

Except from this initial (roots) solving, the remaining unknowns $\Lambda_{2}$, then $\Lambda_{1}, Z_{2}, Z_{1}$ and last $Y_{2}$, are obtained analytically one per one for each $\Lambda_{3}^{(s)}$ solution (thanks to the equations $\mathrm{GB}_m=0$, $m\geq 2$, given by the elements of the Gröbner basis GB, when $\Lambda_{3}$ is evaluated). The variables $X_{1}$, $X_{2}$, $Y_{1}$ are finally given by the three linear equations of point (a).

This polynomial optimization approach shows that, generically, for the distance to cubic symmetry problem, the number of critical points solutions of the first-order Euler--Lagrange equations~\eqref{eq:Reducesys} is finite, the corresponding solutions $\bH^{(s)}$ being fully determined by all the roots of the polynomials in the Gröbner basis GB. The global minimum $\min f(\bH)$ is simply the minimum minimorum
\begin{equation*}
  \min_{1 \le s \le 8} \norm{\bH_{0}-\bH^{(s)}}^{2} = \norm{\bH_{0}-\bH^{(1)}}^{2}=2530.47\, \mathrm{GPa}^2,
\end{equation*}
which is here given by the solution $s=1$, $\Lambda_{3}=\Lambda_{3}^{(1)}$,
\begin{align*}
  X_{1}       & = -6.396655,  & X_{2}       & = 27.780761,  & Y_{1}       & =-2.277535,
  \\
  Y_{2}       & = 44.251233,  & Z_{1}       & = -4.557361,  & Z_{2}       & =21.161420,
  \\
  \Lambda_{1} & = -36.401302, & \Lambda_{2} & = -20.226895, & \Lambda_{3} & = -38.908854,
\end{align*}
for $\bH^*$. The numerical value $f(0)=\norm{\bH_{0}}^2=57886.9\, \mathrm{GPa}^2$ for $\bH$ isotropic is found larger than
the one $2530.47\, \mathrm{GPa}^2$ for the optimal cubic tensor $\bH$.

With the values $\lambda=\lambda_{0}={1583}/{15}=105.533333$ and $\mu=\mu_{0}={1453}/{15}=96.866667$,
the tensor
\begin{equation*}
  \bE^*=2\mu_{0} \bI +\lambda_{0} \Id \otimes \Id+ \bH^*,
\end{equation*}
of Voigt representation
\begin{equation}\label{eq:Ecubic}
  [\bE^*]
  ={\scriptsize\left(
  \begin{array}{cccccc}
      240.130916  & 144.442188  & 125.760229  & 6.39665526  & 41.9736976  & -21.1614201 \\
      144.442188  & 223.956510  & 141.934636  & -27.7807617 & 2.27753546  & 16.6040582  \\
      125.760229  & 141.934636  & 242.638469  & 21.3841064  & -44.2512331 & 4.55736193  \\
      6.39665526  & -27.7807617 & 21.3841064  & 133.267969  & 4.55736193  & 2.27753546  \\
      41.9736976  & 2.27753546  & -44.2512331 & 4.55736193  & 117.093562  & 6.39665526  \\
      -21.1614201 & 16.6040582  & 4.55736193  & 2.27753546  & 6.39665526  & 135.775521  \\
    \end{array}
  \right)
  }
  \text{ GPa}
\end{equation}
is the (cubic) elasticity tensor that minimizes the distance to cubic symmetry, with then
\begin{equation*}
  d(\bE_{0}, \text{cubic symmetry})=74.13
  \, \text{ GPa}.
\end{equation*}
With a relative distance
\begin{equation*}
  \frac{\norm{\bE_{0}-\bE^{*}}}{ \norm{\bE_{0}}}=0.1039,
\end{equation*}
it is slightly better than the solution obtained by François--Geymonat--Berthaud by a numerical iterative method~\cite{FGB1998}.

As $\bH^*\neq 0$, the tensor $\bE^{*}$ is cubic. The distance of $\bE_{0}$ to isotropy,
\begin{equation*}
  d(\bE_{0}, \text{isotropy})=
  \norm{\bE_{0}-\left(2\mu_{0} \bI +\lambda_{0} \Id \otimes \Id\right)}=246.68
  \, \text{ GPa},
\end{equation*}
is found larger than the one to cubic symmetry, with a relative distance to isotropy
\begin{equation*}
  \frac{d(\bE_{0}, \text{isotropy})}{\norm{\bE_{0}}}=0.3458.
\end{equation*}

By remark \ref{rem:normal-form-hooke}, the normal form (denoted here by $\bE^{*}_\octa$) of the optimal cubic elasticity tensor
$\bE^{*}=(\lambda=\lambda_{0}, \mu=\mu_{0}, 0, 0, \bH^*)$ given by \eqref{eq:Ecubic}, is obtained directly
thanks to the computation of its invariants. We get, by the explicit formulas \eqref{eq:delta} to \eqref{eq:J3detail},
\begin{equation*}
  J_{2}=\norm{\bH^*}^{2}=55356.440
  \, \mathrm{GPa}^{2},
  \qquad
  J_{3}=\tr_{13} (\bH^{*\,3})=-2377889.1
  \, \mathrm{GPa}^{3},
\end{equation*}
so that, in GPa,
\begin{equation*}
  \lambda=105.533333
  , \qquad
  \mu=96.866667
  ,
  \qquad
  \delta=\frac{J_{3}}{4J_{2}}=-10.738990,
\end{equation*}
and
\begin{equation}\label{eq:EcubicNormal}
  [\bE^*_{\octa}]={\footnotesize
  \begin{pmatrix}
    213.354743
      &
    148.489295
      & 148.489295
      & 0          & 0          & 0                                    \\
    148.489295
      & 213.354743
      & 148.489295
      & 0          & 0          & 0                                    \\
    148.489295
      & 148.489295 & 213.354743
      & 0          & 0          & 0                                    \\
    0 & 0          & 0          & 139.822628
      & 0          & 0                                                 \\
    0 & 0          & 0          & 0          & 139.822628
      & 0                                                              \\
    0 & 0          & 0          & 0          & 0          & 139.822628
  \end{pmatrix}_{(\ee_{1}, \ee_{2}, \ee_{3})}}
  \ \text{GPa},
\end{equation}
which, for practical applications, can be by approximated by
\begin{equation}
  [\bE^*_{\octa}]=
  \begin{pmatrix}
    213
      &
    148.5
      & 148.5
      & 0     & 0   & 0               \\
    148.5
      & 213
      & 148.5
      & 0     & 0   & 0               \\
    148.5
      & 148.5 & 213
      & 0     & 0   & 0               \\
    0 & 0     & 0   & 140
      & 0     & 0                     \\
    0 & 0     & 0   & 0   & 140
      & 0                             \\
    0 & 0     & 0   & 0   & 0   & 140
  \end{pmatrix}_{(\ee_{1}, \ee_{2}, \ee_{3})}
  \ \text{GPa}.
\end{equation}

\section{Distance to cubic elasto-plasticity as a polynomial optimization problem}
\label{sec:distance-cubic-elasto-plasticity}

The anisotropic Hill elasto-plasticity theory for metallic materials introduces not one but two fourth-order constitutive tensors \cite{Hil1948,LC1985,Hil1998,BCCF2012},
\begin{itemize}
  \item a first one, $\bE\in \Ela$, to describe the anisotropic elasticity,
  \item a second one, $\bP$ (sometimes considered as dimensionless), to describe the yield (plasticity)  criterion, and such as
        the condition
        \begin{equation*}
          \bsigma':\bP: \bsigma' - R^{2}<0
        \end{equation*}
        corresponds to an elastic loading or unloading stage (with $\bsigma'\in \HH^{2}$ the continuum mechanics deviatoric stress tensor). When assumed constant, the scalar $R$ stands for the material yield stress, when taken as evolving during loading, it stands for the material hardening. The Hill tensor $\bP$ has the indicial symmetries of elasticity tensors (so that $\bP\in \Ela$).
\end{itemize}

With no lack of generality, instead of $\bP$, we can work with a tensor $\bF$ of elasticity-type, and compute a dimensionless Hill tensor $\bP=\bF/C$ by normalizing afterward $\bF$ with a constant $C$. Indeed, when $\bF$ is in its normal form~\eqref{eq:VoigtCubic}, setting
\begin{equation}\label{eq:C-L}
  C:=\frac{2 }{3}(F_{1111}-F_{1122}) \quad \text{and} \quad
  L:=\frac{3 F_{1212}}{F_{1111}-F_{1122}},
\end{equation}
allows to recover the standard expression of cubic Hill yield criterion (in cubic basis $(\ee_{i})$), as
\begin{equation*}
  \bsigma':\bP:\bsigma'
  =\frac{1}{2} \left((\sigma_{11}-\sigma_{22})^{2}+(\sigma_{33}-\sigma_{11})^{2}+(\sigma_{22}-\sigma_{33})^{2}\right)+2 L \left(\sigma_{12}^{2}+\sigma_{13}^{2}+\sigma_{23}^{2}\right).
\end{equation*}

The harmonic decomposition of $\bF$ is then (see remark \ref{rem:HrmDecompNorm2})
\begin{equation*}
  \bF=\left(\ell, m , \be', \bw', \bK\right),
\end{equation*}
with
\begin{equation*}
  \ell=\frac{1}{15}(2 \tr\be-\tr\bw) \quad \text{and} \quad m=\frac{1}{30}(3\tr\bw - \tr\be),
\end{equation*}
the Lamé constants of $\bF$, where
\begin{equation*}
  \be:=\tr_{12}\bF \quad \text{and} \quad \bw:=\tr_{13}\bF ,
\end{equation*}
are respectively the dilatation and Voigt tensors of $\bF$, and  $\bK=(\bF)^{s\prime}\in \HH^{4}$ is the harmonic fourth-order component of $\bF$, given by \eqref{eq:H},
\begin{equation*}
  \bK=(\bF)^{s} -(2m+ \ell) \Id \odot \Id -
  \frac{2}{7}\Id \odot ( \be^{\prime} +2  \bw^{\prime}).
\end{equation*}

We now assume that two given elasto-plasticity tensors $\bE_{0}$ and $\bF_{0}$ are available (possibly triclinic) for a given metallic material. As a generalization of the formulation of the distance problem of \autoref{sec:distance-cubic-elasticity}, in which only one constitutive tensor (the elasticity tensor) was involved, we propose to define the optimum cubic estimates $\bE^{**}$ and $\bF^{**}=C \bP^{**}$ of the two elasto-plasticity constitutive tensors, as the minimizers of the following quadratic function (with $W$ a given strictly positive weight)
\begin{equation*}
  f(\bE, \bF):= \norm{\bE_{0}-\bE}^{2}+W \norm{\bF_{0}-\bF}^{2},
\end{equation*}
at given  tensors $\bE_{0}$ and $\bF_{0}=C \bP_{0}$, under the constraint that both the elasticity tensor $\bE$ and the Hill tensor $\bF=C\bP$ are cubic and share the same cubic axes (by theorem \ref{thm:EFcubic}). The introduction of a weight $W$ is necessary in practice, since the orders of magnitude (and the units) of the Hooke and Hill
tensors are often very different.

\begin{rem}\label{rem:normElaEla}
  $\sqrt{\norm{\bE}^{2}+W \norm{\bF}^{2}}$, with $W>0$, is a norm on $\Ela \oplus \Ela$.
\end{rem}

We first perform the harmonic decompositions of $\bE_{0}$ and $\bF_{0}$,
\begin{equation*}
  \bE_{0}=\left(\lambda_{0}, \mu_{0} , \bd_{0}', \bv_{0}', \bH_{0}\right),
  \qquad
  \bF_{0}=\left(\ell_{0}, m_{0} , \be_{0}', \bw_{0}', \bK_{0}\right),
\end{equation*}
with $\lambda_{0}, \mu_{0}, \ell_{0}, m_{0}\in \HH^0$, $\bd_{0}', \bv_{0}',\be_{0}', \bw_{0}'\in \HH^2$ and
$\bH_{0}, \bK_0 \in \HH^{4}$ their harmonic components.
The harmonic decompositions of
the sought cubic tensors $\bE$ and $\bF$ are
\begin{equation*}
  \bE=\left(\lambda, \mu , 0, 0, \bH\right),
  \qquad
  \bF=\left(\ell, m , 0, 0 , \bK\right),
\end{equation*}
with $\lambda, \mu, \ell, m\in \HH^0$,
$\bH, \bK \in \HH^{4}$,
and, according to \eqref{eq:Eharm}, we have
\begin{equation*}
  \bE=2\mu \bI + \lambda \Id \otimes \Id+\bH
  \quad \text{and}\quad
  \bF=2m \bI + \ell \Id \otimes \Id+\bK,
\end{equation*}
with $\bd_{2}'(\bH)=\bd_{2}'(\bK)=0$ and $\bK=k \bH$ (by theorem \ref{thm:Ecubic}). Using the formula~\eqref{eq:normE2} for both $\norm{\bE_{0}-\bE}^{2}$ and $\norm{\bF_{0}-\bF}^{2}$, we get
\begin{align*}
  f(\bE) & =3 \left(3 (\lambda_{0}-\lambda)^{2} + 4 (\lambda_{0}-\lambda) (\mu_{0}-\mu) +8 (\mu_{0}-\mu) ^{2}\right)
  \\
         & \quad+ 3W \left(3 (\ell_{0}-\ell)^{2} + 4 (\ell_{0}-\ell) (m_{0}-m) +8 (m_{0}-m) ^{2}\right)
  \\
         & \quad + \frac{2}{21} \norm{\bd_{0}^{\prime} + 2 \bv_{0}^{\prime}}^{2} + \frac{4}{3}\norm{\bd_{0}^{\prime} - \bv_{0}^{\prime}}^{2} + \frac{2W}{21} \norm{\be_{0}^{\prime} + 2 \bw_{0}^{\prime}}^{2} + \frac{4W}{3}\norm{\be_{0}^{\prime} - \bw_{0}^{\prime}}^{2}
  \\
         & \quad +   \norm{\bH_{0}-\bH}^{2} + W\norm{\bK_{0}-k \bH}^{2}.
\end{align*}
The minimum of this expression is obtained for
\begin{equation*}
  \lambda=\lambda_{0}, \quad \mu=\mu_{0}, \quad \ell=\ell_{0}, \quad m=m_{0}, \quad \bH = \bH^{*}, \quad k = k^{*},
\end{equation*}
where $\bH^{*}$ and $k^{*}$ correspond to absolute minima of the problem
\begin{equation*}
  \min_{\bH,k} \left\{\norm{\bH_{0}-\bH}^{2}+W \norm{\bK_{0}-k \bH}^{2}\right\}, \quad \text{with} \quad \bd_{2}'(\bH)=(\bH\3dots \bH)'=0.
\end{equation*}

\begin{rem}
  Note that the condition $\bK = k\bH\ne 0$ implies that the pair $(\bH,\bK)$ is cubic, meaning that both $\bH$ and $\bK$ are cubic and share the same cubic axes.
\end{rem}

To solve the problem of the distance of a pair $(\bE_0, \bF_0)$ to cubic symmetry, we therefore have to find the critical points of the polynomial function
\begin{equation}\label{eq:FEP}
  F(\bH, k, \blambda):=\norm{\bH_{0}-\bH}^{2}+W \norm{\bK_{0}-k \bH}^{2}
  +\blambda:g,
\end{equation}
with $\bH\in \HH^{4}$ an harmonic fourth-order tensor, $k$ a scalar, and where the Lagrange multiplier $\blambda\in \HH^{2}$ is an deviatoric second-order tensor. Observe that the first-order Euler--Lagrange equations for this optimization problem can furthermore be recast in a similar form as \eqref{eq:Reducesys}.

\begin{lem}\label{lem:sysEF}
  The first-order Euler--Lagrange equations,
  \begin{equation*}
    \frac{\partial F}{\partial \bH}=0, \quad \frac{\partial F}{\partial k}=0
    \quad
    \text{and} \quad \frac{\partial F}{\partial \blambda}=0,
  \end{equation*}
  imply
  \begin{equation}\label{eq:pbHK}
    \begin{cases}
      \bH \3dots (\bH_{0}+kW \bK_{0}) -(\bH_{0}+kW \bK_{0})\3dots \bH=0 \qquad & \text{(3 scalar equations)} \\
      (\bH-\bH_{0})::\bH =0 \qquad                                             & \text{(1 scalar equation)}  \\
      W (k \bH-\bK_{0})::\bH =0 \qquad                                         & \text{(1 scalar equation)}  \\
      \bd'_{2}=0  \qquad                                                       & \text{(5 scalar equations)}
    \end{cases}
  \end{equation}
\end{lem}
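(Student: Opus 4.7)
The plan is to mimic closely the proof of Lemma~\ref{lem:sys}, now with two extra pieces: an additional quadratic term in $k$ in the cost function and an extra scalar variable $k$ to differentiate against. The main work consists in computing the three partial derivatives of $F$ cleanly and then combining them using the same contraction tricks as before.

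First I would compute the three gradients. The derivative of $\blambda : g$ with respect to $\bH$ is $2\bS(\blambda)$ exactly as in Section~\ref{sec:distance-cubic-elasticity}, the derivative of $\norm{\bH_{0}-\bH}^{2}$ is $-2(\bH_{0}-\bH)$, and expanding $W\norm{\bK_{0}-k\bH}^{2}$ in $\bH$ gives gradient $-2Wk(\bK_{0}-k\bH)$. Hence $\partial F/\partial \bH=0$ rewrites compactly as
\begin{equation*}
(1+Wk^{2})\bH \;-\;\bigl(\bH_{0}+Wk\bK_{0}\bigr)\;+\;\bS(\blambda)\;=\;0.
\end{equation*}
The derivative in $k$ gives $-2W(\bK_{0}-k\bH)::\bH=0$, which is exactly the third equation of~\eqref{eq:pbHK}, and the derivative in $\blambda$ gives the constraint $\bd'_{2}=0$, which is the fourth block.

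Next I would derive the first equation of~\eqref{eq:pbHK} by repeating the ``contract three times with $\bH$ on each side and subtract'' argument used to obtain~\eqref{eq:24a}--\eqref{eq:24b}. Contracting the rewritten $\bH$-equation on the right yields
\begin{equation*}
(1+Wk^{2})\bH\3dots\bH-\bigl(\bH_{0}+Wk\bK_{0}\bigr)\3dots\bH+\bS(\blambda)\3dots\bH=0,
\end{equation*}
and the symmetric identity on the left. Subtracting and using the key identity from the proof of Lemma~\ref{lem:sys}, namely that $\bS(\blambda)\3dots\bH=\bH\3dots\bS(\blambda)$ whenever $\bd'_{2}(\bH)=0$ (which holds by the $\blambda$-equation just derived), cancels the $\bS(\blambda)$ contributions and the $(1+Wk^{2})\bH\3dots\bH$ terms, leaving precisely
\begin{equation*}
\bH\3dots(\bH_{0}+kW\bK_{0})-(\bH_{0}+kW\bK_{0})\3dots\bH=0.
\end{equation*}

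Finally I would obtain the scalar equation $(\bH-\bH_{0})::\bH=0$ by contracting the $\bH$-equation four times with $\bH$:
\begin{equation*}
(1+Wk^{2})\norm{\bH}^{2}-\bigl(\bH_{0}+Wk\bK_{0}\bigr)::\bH+\bS(\blambda)::\bH=0,
\end{equation*}
and splitting it as $(\bH-\bH_{0})::\bH + k\bigl[W(k\bH-\bK_{0})::\bH\bigr] + \bS(\blambda)::\bH=0$. The middle bracket vanishes by the $k$-equation, and $\bS(\blambda)::\bH=\blambda:\bd'_{2}(\bH)=0$ by Euler's homogeneous-function identity exactly as in Lemma~\ref{lem:sys}, so only $(\bH-\bH_{0})::\bH=0$ survives.

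I do not foresee a real obstacle; the only non-mechanical ingredient is the symmetry $\bS(\blambda)\3dots\bH=\bH\3dots\bS(\blambda)$ under the constraint $\bd'_{2}=0$, which has already been established in the proof of Lemma~\ref{lem:sys} and can simply be invoked. The rest is bookkeeping of contractions, so the proof will be short and closely patterned on the single-tensor case.
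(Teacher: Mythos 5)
Your proof is correct, and it is precisely the argument the paper intends: the paper actually omits a written proof of Lemma~\ref{lem:sysEF}, merely remarking that the Euler--Lagrange equations for~\eqref{eq:FEP} ``can be recast in a similar form as~\eqref{eq:Reducesys}'', so the expected derivation is exactly the one you carry out by adapting the proof of Lemma~\ref{lem:sys}. Your three gradient computations are right, the rewriting $(1+Wk^{2})\bH-(\bH_{0}+Wk\bK_{0})+\bS(\blambda)=0$ is correct, and the cancellations you invoke (symmetry of $\bH\3dots\bH$, the identity $\bS(\blambda)\3dots\bH=\bH\3dots\bS(\blambda)$ under $\bd'_{2}=0$, Euler's identity giving $\bS(\blambda)::\bH=\blambda:\bd'_{2}=0$, and use of the $k$-equation to drop $k\,W(k\bH-\bK_{0})::\bH$) all go through as stated.
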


\begin{rem}
  The distance problem thus  formulated is not a quadratic optimization problem. The equation
  \begin{equation*}
    (k \bH-\bK_{0})::\bH =0
  \end{equation*}
  is indeed polynomial, but of degree three in the variable $\xx=(\bH, k)$.
\end{rem}

The first equation of \eqref{eq:pbHK} is not linear anymore, it cannot be used to reduce the number of unknowns before the computation of a Gröbner basis. The quasi-analytical resolution by the obtention of a  Gröbner basis will nevertheless be similar (but with four more variables) to the resolution for the single elasticity tensor case (except that the computation of a Gröbner basis  will be more computer time consuming).

\section{Numerical application -- Distance to cubic elasto-plasticity}
\label{sec:num-cubic-elasto-plasticity}

We consider here the example of the triclinic elasticity tensor $\bE_{0}$ (still given by \eqref{eq:E0}, the harmonic decomposition $\bE_0=\left(\lambda_{0}, \mu_{0} , \bd_{0}', \bw', \bH_{0}\right)$ remaining the one of \autoref{sec:num-cubic-elasticity}), and of the following triclinic  plasticity tensor $\bF_{0}$, in Voigt notation,
\begin{equation*}\label{eq:F0}
  [\bF_{0}]=
  \left(
  \begin{array}{cccccc}
      191 & -54 & -83 & -34 & -94 & 59  \\
      -54 & 176 & -71 & 71  & -40 & -23 \\
      -83 & -71 & 207 & -44 & 130 & -36 \\
      -34 & 71  & -44 & 99  & -15 & -17 \\
      -94 & -40 & 130 & -15 & 179 & -40 \\
      59  & -23 & -36 & -17 & -40 & 79  \\
    \end{array}
  \right)
  ,
  \qquad
  \norm{\bF_{0}}=715.78.
\end{equation*}
Using the formulas of theorem \ref{thm:Ecubic}, we obtain $\bF_{0}=\left(\ell_{0}, m_{0} , \be_{0}', \bw_{0}', \bK_{0}\right)$, with
\begin{equation*}
  \ell_{0}=-\frac{324}{5},
  \qquad
  m_{0}=\frac{1853}{15},
\end{equation*}
\begin{equation*}
  \be_{0}'=(\tr_{12}\bF_{0})'=\left(
  \begin{array}{ccc}
      \frac{4}{3} & 0            & -4          \\
      0           & -\frac{5}{3} & -7          \\
      -4          & -7           & \frac{1}{3} \\
    \end{array}
  \right),
  \quad
  \bw_{0}'=(\tr_{13}\bF_{0})'=\left(
  \begin{array}{ccc}
      \frac{59}{3} & 21             & 19            \\
      21           & -\frac{226}{3} & -13           \\
      19           & -13            & \frac{167}{3} \\
    \end{array}
  \right)
  .
\end{equation*}
and (in Voigt notation)
\begin{equation*}
  [\bK_{0}]=\frac{1}{35}
  \left(
  \begin{array}{cccccc}
      -101  & -727  & 828   & -1275 & -3460 & 1855  \\
      -727  & 1304  & -577  & 2650  & -920  & -1015 \\
      828   & -577  & -251  & -1375 & 4380  & -840  \\
      -1275 & 2650  & -1375 & -577  & -840  & -920  \\
      -3460 & -920  & 4380  & -840  & 828   & -1275 \\
      1855  & -1015 & -840  & -920  & -1275 & -727  \\
    \end{array}
  \right).
\end{equation*}

The cost function
\begin{equation*}
  f(\bH)=\norm{\bH_{0}-\bH}^{2}+ W \norm{\bK_{0}-k \bH}^{2},
\end{equation*}
that we have to minimize in order to solve the distance problem
\begin{equation*}
  \min_{(\bE, \bF)\, \text{cubic}} \left(
  \norm{\bE_{0}-\bE}^{2}+ W \norm{\bF_{0}-\bF}^{2}
  \right),
\end{equation*}
is (in GPa$^{2}$)
\begin{align*}
  f(\xx) & =
  8 \Lambda_{1}^{2}+2 \Lambda_{1} \Lambda_{2}+2 \Lambda_{1} \Lambda_{3}+668 \Lambda_{1}+8 \Lambda_{2}^{2}+2 \Lambda_{2} \Lambda_{3}+540 \Lambda_{2}+8 \Lambda_{3}^{2}+620 \Lambda_{3}
  \\
         & \quad +16 X_{1}^{2}+8 X_{1} (X_{2}-11)+8 X_{2}^{2}-456 X_{2}+16 Y_{1}^{2}+8 Y_{1} Y_{2}-392 Y_{1}
  \\
         & \quad +8 Y_{2}^{2}-808 Y_{2}+16 Z_{1}^{2}+8 Z_{1} Z_{2}-264 Z_{1}+8 Z_{2}^{2}-264 Z_{2}+ \frac{2026042}{35}
  \\
         & \quad +W \Big(16 X_{1}^{2} k^{2}+8 X_{1} k (X_{2} k-70)+8 X_{2}^{2} k^{2}+920 X_{2} k+16 Y_{1}^{2} k^{2}+8 Y_{1} Y_{2} k^{2}+160 Y_{1} k
  \\
         & \quad +8 Y_{2}^{2} k^{2}+1792 Y_{2} k+8 \Lambda_{1}^{2} k^{2}+2 \Lambda_{1} \Lambda_{2} k^{2}+2 \Lambda_{1} \Lambda_{3} k^{2}+8 \Lambda_{2}^{2} k^{2}+2 \Lambda_{2} \Lambda_{3} k^{2}+8 \Lambda_{3}^{2} k^{2}+16 k^{2} Z_{1}^{2}
  \\
         & \quad +8 k^{2} Z_{1} Z_{2}+8 k^{2} Z_{2}^{2}-258 \Lambda_{1} k+304 \Lambda_{2} k-318 \Lambda_{3} k-344 k Z_{1}+656 k Z_{2}+\frac{6495682}{35}\Big) .
\end{align*}
It is expressed in the variable
\begin{equation*}
  \xx = (k, X_{1}, X_{2}, Y_{1}, Y_{2}, Z_{1}, Z_{2}, \Lambda_{1}, \Lambda_{2}, \Lambda_{3}),
\end{equation*}
if the parameterization \eqref{eq:Hcubic} is used for $\bH$. The first-order Euler-Lagrange equations are given in lemma \ref{lem:sysEF}. In components, they consist of
\begin{itemize}
  \item[(a)]  \emph{the three scalar equations,}
    \begin{align*}
       & (a1) &  & -43 k \Lambda _{1} W+125 k \Lambda _{2} W+328 k \Lambda _{3} W+X_{1} (199-264 k W)+X_{2} (52-204 k W)
      \\
       &      &  & -555 k W Y_{1}-70 k W Y_{2}+220 k W Z_{1}+159 k W Z_{2}-33 \Lambda _{1}-132 \Lambda _{3}+138 Y_{1}
      \\
       &      &  & -11 Y_{2}-187 Z_{1}-310 Z_{2}=0,
      \\
       & (a2) &  & k W (-204 \Lambda _{1}-896 \Lambda _{2}-20 \Lambda _{3}+375 X_{1}+43 X_{2}+61 Y_{1}+152 Y_{2}-25 Z_{1}+185 Z_{2}) \\
       &      &  & +52 \Lambda _{1}+404 \Lambda _{2}+49 \Lambda _{3}+33 X_{2}+123 Y_{1}
      +270 Y_{2}-79 Z_{1}-46 Z_{2}=0,
      \\
       & (a3) &  & 460 k \Lambda _{1} W-70 k \Lambda _{2} W+185 k \Lambda _{3} W-7 X_{1} (13 k W+21)+X_{2} (129 k W-334)
      \\
       &      &  & +4 k W Y_{1}-125 k W Y_{2}-612 k W Z_{1}+20 k W Z_{2}-228 \Lambda _{1}-11 \Lambda _{2}-46 \Lambda _{3}
      \\
       &      &  & +99 Y_{1}+156 Z_{1}-49 Z_{2}=0,
    \end{align*}
    which correspond to the linear equation $  \bH \3dots (\bH_{0}+kW \bK_{0}) -(\bH_{0}+kW \bK_{0})\3dots \bH=0$,

  \item[(b)] \emph{the scalar equation} $(\bH-\bH_{0})::\bH=0$ (detailed in point (b) of \autoref{sec:num-cubic-elasticity}),

  \item[(c)] \emph{the scalar equation}
    \begin{align*}
      (k \bH-\bK_{0})::\bH & =
      8 k \Lambda _{1}^{2}+2 k \Lambda _{1} \Lambda _{2}+2 k \Lambda _{1} \Lambda _{3}+8 k \Lambda _{2}^{2}+2 k \Lambda _{2} \Lambda _{3}+8 k \Lambda _{3}^{2}+16 k X_{1}^{2}
      \\
                           & \qquad +8 X_{1} (k X_{2}-35)+8 k X_{2}^{2}+16 k Y_{1}^{2}+8 k Y_{1} Y_{2}+8 k Y_{2}^{2}+16 k Z_{1}^{2}
      \\
                           & \qquad+8 k Z_{1} Z_{2}+8 k Z_{2}^{2}-129 \Lambda _{1}+152 \Lambda _{2}-159 \Lambda _{3}+460 X_{2}+80 Y_{1}
      \\
                           & \qquad +896 Y_{2}-172 Z_{1}+328 Z_{2}
      \\
                           & =0
      ,
    \end{align*}

  \item[(d)] \emph{and the 5 equations} $g_{ij}=(\bd_{2})'_{ij}=0$ (detailed in point (c) of \autoref{sec:num-cubic-elasticity}).
\end{itemize}

We set a unit weight $W=1$ for the numerical application. The resolution is similar to the one for the single elasticity tensor case, except that now the variable $\xx$ is 10-dimensional, and that there is no \emph{a priori} reduction in the number of scalar unknowns. Rational coefficients are considered for the given tensors $\bE_0$ and $\bF_0$ (and for their harmonic components $\bH_0$ and $\bK_0$). A Gröbner basis $\mathrm{GB}=\set{\mathrm{GB}_{1}, \dotsc , \mathrm{GB}_{111}}$ of 111 elements is computed using Mathematica. Its first element $\mathrm{GB}_{1}$ is found to be a polynomial in $\Lambda_{3}$ only; $\Lambda_{3}$ is either zero (leading to the isotropic solution $\bH=0$) or it is a solution of a polynomial equation of degree 56, which has 18 real non zero roots (in practice determined with a 100 significant digits precision). Once $\mathrm{GB}_{1}(\Lambda_{3})=0$ is solved, the remaining Gröbner basis equations are linear (as in~\eqref{eq:GB-system-1} of \autoref{sec:groebner}) in the variables $\Lambda_{2}$, $\Lambda_{1}$, \ldots , $X_{2}$, $X_{1}$, and $k$.

The minimum minimorum for the cost function is here given by the solution $\Lambda_{3}=-19.612165$
(it is not given by the isotropic solution $\bH=\bK=0$). We get the optimal value $k^{**}=-2.134021$ for $k$ and (in GPa):
\begin{align*}
  X_{1}       & =-16.788457,  & X_{2}       & = 39.191663, & Y_{1}       & = -8.812379,
  \\
  Y_{2}       & = 43.001809,  & Z_{1}       & = -8.048394, & Z_{2}       & = 30.315189,
  \\
  \Lambda_{1} & = -15.769513, & \Lambda_{2} & = 5.950665,  & \Lambda_{3} & = -19.612165,
\end{align*}
so that the optimal tensor $\bH^{**}$ has expression (in Voigt notation)
\begin{equation*}
  [\bH^{**}]
  ={\scriptsize
  \left(
  \begin{array}{cccccc}
      -13.661500 & 19.612165  & -5.950665  & 16.788457  & 34.189430  & -30.315189 \\
      19.612165  & -35.381678 & 15.769513  & -39.191663 & 8.812379   & 22.266795  \\
      -5.950665  & 15.769513  & -9.818848  & 22.403206  & -43.001809 & 8.048394   \\
      16.788457  & -39.191663 & 22.403206  & 15.769513  & 8.048394   & 8.812379   \\
      34.189430  & 8.812379   & -43.001809 & 8.048394   & -5.950665  & 16.788457  \\
      -30.315189 & 22.266795  & 8.048394   & 8.812379   & 16.788457  & 19.612165  \\
    \end{array}
  \right)}
  \text{ GPa}.
\end{equation*}
With the values $\lambda=\lambda_{0}=105.533333$ and $\mu=\mu_{0}=96.866667$,
the optimal cubic elasticity tensor
\begin{equation*}
  \bE^{**}=2\mu_{0} \bI +\lambda_{0} \Id \otimes \Id+ \bH^{**},
\end{equation*}
has Voigt representation,
\begin{equation*}\label{eq:Estarstarccubic}
  [\bE^{**}]
  ={\scriptsize
  \left(
  \begin{array}{cccccc}
      285.605167 & 125.145498 & 99.582668  & 16.788457  & 34.189430  & -30.315189 \\
      125.145498 & 263.884989 & 121.302846 & -39.191663 & 8.812379   & 22.266795  \\
      99.582668  & 121.302846 & 289.447819 & 22.403206  & -43.001809 & 8.048394   \\
      16.788457  & -39.191663 & 22.403206  & 112.636180 & 8.048394   & 8.812379   \\
      34.189430  & 8.812379   & -43.001809 & 8.048394   & 90.916002  & 16.788457  \\
      -30.315189 & 22.266795  & 8.048394   & 8.812379   & 16.788457  & 116.478831 \\
    \end{array}
  \right)}
  \text{ GPa}.
\end{equation*}
Since $\ell=\ell_{0}=-64.800000$ and $m=m_{0}=123.533333$, we get for the optimal cubic plasticity tensor
\begin{equation*}
  \bF^{**}=2m_{0} \bI +\ell_{0} \Id \otimes \Id+ k^{**} \bH^{**},
\end{equation*}
the Voigt representation
\begin{equation*}\label{eq:Fstarstarcubic}
  [\bF^{**}]={\scriptsize
  \left(
  \begin{array}{cccccc}
      211.420595  & -106.652774 & -52.101155 & -35.826922 & -72.960965 & 64.693254  \\
      -106.652774 & 257.771914  & -98.452474 & 83.635836  & -18.805804 & -47.517810 \\
      -52.101155  & -98.452474  & 203.220295 & -47.808914 & 91.766769  & -17.175444 \\
      -35.826922  & 83.635836   & -47.808914 & 89.880860  & -17.175444 & -18.805804 \\
      -72.960965  & -18.805804  & 91.766769  & -17.175444 & 136.232178 & -35.826922 \\
      64.693254   & -47.517810  & -17.175444 & -18.805804 & -35.826922 & 81.680560  \\
    \end{array}
  \right)}
  .
\end{equation*}
The relative distance to cubic symmetry for this two constitutive elasto-plasticity tensors problem is
\begin{equation*}
  \sqrt{\frac{\norm{\bE_{0}-\bE^{**}}^{2}+ \norm{\bF_{0}-\bF^{**}}^{2}}{\norm{\bE_{0}}^{2}+ \norm{\bF_{0}}^{2}}}=0.2462.
\end{equation*}
It is slightly larger than the relative distance for the single elasticity tensor case solved in \autoref{sec:num-cubic-elasticity}.

As $\bH^{**}$ and $\bK^{**}=k^{**} \bH^{**}$ are non zero, the two optimal tensors $\bE^{**}$ and $\bF^{**}$ are cubic (and so is the pair $(\bE^{**},\bF^{**})$). The relative distance of the given pair $(\bE_{0}, \bF_0)$ to isotropy,
\begin{equation*}
  \sqrt{\frac{\norm{\bE_{0}-\left(2\mu_{0} \bI +\lambda_{0} \Id \otimes \Id\right)}^{2}+ \norm{\bF_{0}-\left(2m_{0} \bI +\ell_{0} \Id \otimes \Id\right)}^{2}}
    {\norm{\bE_{0}}^{2}+ \norm{\bF_{0}}^{2}}
  }
  =0.5096,
\end{equation*}
is larger than the one to cubic symmetry.

The normal forms \eqref{eq:VoigtCubic} for both the optimal Hooke and Hill tensors
are finally obtained thanks to the computation of their invariants $\lambda$, $\mu$ $J_{2}$, $J_{3}$ and $\delta$, here evaluated first for $\bE^{**}$ and then for $\bF^{**}$ (by remark \ref{rem:normal-form-hooke}). Using \eqref{eq:CubicDirect} for each tensor $\bE^{**}$ and $\bF^{**}$, we get:
\begin{align*}
  [\bE^{**}_{\octa}] & ={\footnotesize
  \begin{pmatrix}
    229.484303
      &
    140.424515
      & 140.424515
      & 0          & 0          & 0                                    \\
    140.424515
      & 229.484303
      & 140.424515
      & 0          & 0          & 0                                    \\
    140.424515
      & 140.424515 & 229.484303
      & 0          & 0          & 0                                    \\
    0 & 0          & 0          & 131.757849
      & 0          & 0                                                 \\
    0 & 0          & 0          & 0          & 131.757849
      & 0                                                              \\
    0 & 0          & 0          & 0          & 0          & 131.757849
  \end{pmatrix}_{(\ee_{1}, \ee_{2}, \ee_{3})}}
  \ \text{GPa},
  \\
  [\bF^{**}_{\octa}] & ={\footnotesize
  \begin{pmatrix}
    331.183705
      &
    -139.258519
      & -139.258519
      & 0           & 0          & 0                                 \\
    -139.258519
      & 331.183705
      & -139.258519
      & 0           & 0          & 0                                 \\
    -139.258519
      & -139.258519 & 331.183705
      & 0           & 0          & 0                                 \\
    0 & 0           & 0          & 49.074814
      & 0           & 0                                              \\
    0 & 0           & 0          & 0         & 49.074814
      & 0                                                            \\
    0 & 0           & 0          & 0         & 0         & 49.074814
  \end{pmatrix}_{(\ee_{1}, \ee_{2}, \ee_{3})}}
  \ \text{GPa},
\end{align*}
which can be approximated as
\begin{align*}
  [\bE^{**}_{\octa}] & ={ 
  \begin{pmatrix}
    229.5
      &
    140.5
      & 140.5
      & 0     & 0     & 0               \\
    140.5
      & 229.5
      & 140.5
      & 0     & 0     & 0               \\
    140.5
      & 140.5 & 229.5
      & 0     & 0     & 0               \\
    0 & 0     & 0     & 132
      & 0     & 0                       \\
    0 & 0     & 0     & 0   & 132
      & 0                               \\
    0 & 0     & 0     & 0   & 0   & 132
  \end{pmatrix}_{(\ee_{1}, \ee_{2}, \ee_{3})}}
  \text{GPa},
  \\
  [\bF^{**}_{\octa}] & ={ 
  \begin{pmatrix}
    331
      &
    -139
      & -139
      & 0    & 0   & 0            \\
    -139
      & 331
      & -139
      & 0    & 0   & 0            \\
    -139
      & -139 & 331
      & 0    & 0   & 0            \\
    0 & 0    & 0   & 49
      & 0    & 0                  \\
    0 & 0    & 0   & 0  & 49
      & 0                         \\
    0 & 0    & 0   & 0  & 0  & 49
  \end{pmatrix}_{(\ee_{1}, \ee_{2}, \ee_{3})}}
  \text{GPa}.
\end{align*}
These two normal forms are obtained in the same cubic basis $(\ee_{i})$. Finally, by \eqref{eq:C-L}, the Hill parameter associated with $\bF^{**}_{\octa}$ is
\begin{equation*}
  L=0.312949\approx 0.31.
\end{equation*}

\section{Recovering the natural basis of a cubic fourth-order constitutive tensor}
\label{sec:recovering-cubic-normal-form}

A continuum mechanics anisotropic constitutive law, such as elasticity, is not represented by a unique constitutive tensor $\bE$ but by the set of all elasticity tensors $Q\star\bE$ related to $\bE$ by a rotation $Q$. Mathematically speaking, the anisotropic material property is represented by the orbit
\begin{equation*}
  \Orb(\bE)=\set{Q\star \bE, \; \det Q =1}.
\end{equation*}
For a given cubic elasticity tensor $\bE$, there exists a tensor $\bE_\octa$ in its orbit that is fixed by all the transformations
of the orientation preserving octahedral group $\octa$. The tensor $\bE_\octa$ is the so-called \emph{normal form} of $\bE$, and has
\eqref{eq:VoigtCubic} as Voigt representation.

When a cubic constitutive tensor -- such as the tensors $\bE^{*}$, $\bE^{**}$ and $\bF^{**}$ of previous numerical applications sections -- is not expressed in its natural (cubic) basis, one needs
\begin{enumerate}
  \item to compute its normal form,
  \item and to compute the rotation $Q$ that puts it in its normal form.
\end{enumerate}
Task $(1)$ can be done in a straightforward manner, using \emph{Invariant Theory} (see remark \ref{rem:normal-form-hooke}).
Note that the polynomial ($\lambda$, $\mu$, $J_{2}$ and $J_{3}$) and rational ($\delta$) invariants then involved are computed in the working basis (in which are expressed $\bE^{*}$, $\bE^{**}$ and $\bF^{**}$) by explicit formulas, whereas the methodology proposed in \cite{SMB2020} needs the computation of the eigenvalues of the Kelvin $6\times 6$ matrix representation of the considered elasticity tensor.

In practice, there are several ways to perform task $(2)$: using \emph{Maxwell multipoles}~\cite{Bae1993} and solving a degree-8 polynomial equation in one variable, or solving the linear system~\cite[Appendix B]{ADD2020}
\begin{equation*}
  \bL(\ba):=\tr ( \bH \times \ba) = 0, \qquad \ba \in \HH^{2},
\end{equation*}
where $\bH$ is the fourth-order harmonic component of the considered (cubic) elasticity tensor $\bE$ (it will next be either $\bH^{*}$ or $\bH^{**}$ or $\bK^{**}$). Here, the product $\times$ is the generalized cross product between totally symmetric tensors, defined by \eqref{eq:SxT}, and the totally symmetric fifth-order tensor $\bH \times \ba$ has components
\begin{equation*}
  \left(\bH \times \ba\right)_{ijklm} = \left(a_{i r}\varepsilon_{rjs}H_{sklm}\right)^{s}.
\end{equation*}
Generically, the deviatoric tensor $\ba$, solution of the equation $\bL(\ba)=0$, is orthotropic and carries the cubic basis $(\ee_{i})$. We shall apply the second methodology, which reduces to solve the linear equation $\bL(\ba)=0$, once the components of a cubic elasticity tensor are given (in an arbitrarily oriented basis).

\begin{rem}
  To avoid useless computations, it is important to note that, given a cubic elasticity tensor $\bE=(\lambda, \mu, 0, 0, \bH)$, it is equivalent to solve
  \begin{equation*}
    \tr(\bH \times \ba)=0, \qquad \ba \in \HH^{2},
  \end{equation*}
  or to solve
  \begin{equation*}
    \tr(\bE^s \times \ba)=0, \qquad \ba \in \HH^{2},
  \end{equation*}
  where $\bE^s$ is the totally symmetric part of $\bE$.
\end{rem}

The leading harmonic part $\bH$ of $\bE$ is assumed to be known. Indeed, it has been computed in the previous applications sections for the three optimal tensors $\bE^{*}$, $\bE^{**}$ and $\bF^{**}$. The methodology to determine the rotation matrix $Q$ is the following.
\begin{enumerate}
  \item Compute a basis $(\ba_{1}, \ba_{2})$ of the two-dimensional space of solutions of the linear system $\bL(\ba)=0$.

  \item The pair of second-order tensors $(\ba_{1}, \ba_{2})$ is orthotropic~\cite{OKDD2021}. Hence, a random tensor $\ba = t\ba_{1} + s\ba_{2}$ in this subspace will be generically orthotropic (as also, almost certainly, both $\ba_{1}$, $\ba_{2}$ computed by a Computer Algebra System). For such an orthotropic tensor, an orthogonal basis of eigenvectors $\uu_{i}$ will provide the solution as the rotation matrix $Q=(\uu_{1}, \uu_{2},\uu_{3})$.
  \item The normal form $\bE_{\octa}$ of $\bE$ is then obtained as
        \begin{equation*}
          \bE_{\octa}=Q\star \bE,
          \qquad
          (\bE_{\octa})_{ijkl}=Q_{ip}Q_{jq}Q_{kr}Q_{ls} E_{pqrs}.
        \end{equation*}
\end{enumerate}

\subsection*{Rotation associated with the cubic normal form for $\bE^*$}

Let us first apply this methodology to the cubic tensor $\bE=\bE^*=(\lambda_{0}, \mu_{0}, 0, 0, \bH=\bH^*)$ the nearest to $\bE_{0}$ (given by \eqref{eq:Ecubic}).
A basis for the space of traceless solutions for the system $\bL(\ba)=\left.\tr(\bH^* \times \ba)=0\right.$ is
\begin{equation*}
  \ba_{1}={\footnotesize
  \begin{pmatrix}
    1             & 0.1719141925  & -0.4213723048 \\
    0.1719141925  & -0.3670248470 & 0             \\
    -0.4213723048 & 0             & -0.6329751530
  \end{pmatrix}
  },
\end{equation*}
\begin{equation*}
  \ba_{2}={\footnotesize
  \begin{pmatrix}
    0             & -0.1797028254 & -1.106498932 \\
    -0.1797028254 & 3.880108140   & 1            \\
    -1.106498932  & 1             & -3.880108140
  \end{pmatrix}
  },
\end{equation*}
and the associated eigenvectors matrix $Q$ is
\begin{equation}\label{eq:Q}
  Q={\footnotesize
  \begin{pmatrix}
    -0.244381988215706 & 0.112674376458757 & -0.963110548548488 \\ -0.966260413618850 & -0.111624674249115 & 0.232122263412857\\ 0.0813526699553721 & -0.987342097243103 & -0.136151849428213
  \end{pmatrix} .
  }
\end{equation}
Finally, the normal form of $\bE^*$ is $\bE^*_{\octa}=Q\star \bE^*$, and one recovers \eqref{eq:EcubicNormal}.

\subsection*{Rotation associated with the cubic normal forms for $\bE^{**}$ and $\bF^{**}$}

The methodology also applies  to the optimal cubic tensors $\bE^{**}$ and $\bF^{**}$ of \autoref{sec:num-cubic-elasto-plasticity} .
A basis for the space of traceless solutions for the system $\left.\tr(\bH^{**} \times \ba)=0\right.$ is
\begin{equation*}
  \ba_{1}={\footnotesize
  \left(
  \begin{array}{ccc}
      1            & 0           & -25.46740405 \\
      0            & 37.84482400 & 20.16410890  \\
      -25.46740405 & 20.16410890 & -38.84482400 \\
    \end{array}
  \right) , }
\end{equation*}
\begin{equation*}
  \ba_{2}={\footnotesize
  \left(
  \begin{array}{ccc}
      0           & 1            & 49.43123622  \\
      1           & -76.54548515 & -40.58449192 \\
      49.43123622 & -40.58449192 & 76.54548515  \\
    \end{array}
  \right)  ,}
\end{equation*}
and the associated eigenvectors matrix $Q$ is
\begin{equation}\label{eq:QEF}
  Q={\footnotesize
  \left(
  \begin{array}{ccc}
      -0.407028622361688 & 0.194165806279755  & -0.892539825582074 \\
      0.167497955759488  & -0.944709963469717 & -0.281899839903180 \\
      -0.897926575725844 & -0.264239899698699 & 0.352001619332188  \\
    \end{array}
  \right)
  }.
\end{equation}
The normal forms of $\bE^{**}$ and $\bF^{**}$ are $\bE^{**}_{\octa}=Q\star \bE^{**}$ and $\bF^{**}_{\octa}=Q\star \bF^{**}$. They  are (simultaneously) obtained for the (same) rotation $Q$. One then recovers the normal forms given at the end of \autoref{sec:num-cubic-elasto-plasticity}
(\emph{i.e.}, in Voigt notation, the matrices $[\bE^{**}_{\octa}]$ and $[\bF^{**}_{\octa}]$).

\section{Conclusion}

Thanks to the recent characterization of the cubic elasticity symmetry classes by polynomial covariants \cite{OKDD2021}, we have formulated the \emph{distance to cubic symmetry problem} as a polynomial optimization problem, and derived the associated Euler--Lagrange equations. We have used the theory of Gröbner bases to solve these equations, in a quasi-analytical manner (using a Computer Algebra System). This methodology has been applied to the case of a single elasticity tensor, as well as to the case of a pair of Hooke and Hill elasto-plasticity tensors. Besides, we have recovered the normal forms of the optimal cubic elasticity/plasticity tensors.

The key-point of the study is that the corresponding cubic symmetry is defined by a polynomial tensorial equation, which is a submersion (apart from the isotropic singularity, which is controlled). This makes it possible to apply the Euler--Lagrange method and use Gröbner bases to compute the critical points.

\appendix

\section{Solving algebraic systems using Gröbner bases}
\label{sec:groebner}

In this appendix, we propose to explain how to use \emph{Gröbner bases} to solve non-linear algebraic systems. Our goal is not to summarize the theory of Gröbner bases, nor to introduce the basics of algebraic geometry but to explain through some examples how it works. For more details on this topic and a deeper insight, we strongly recommend the following books~\cite{CLO2007,Stu1993}, which
contain a lot of references.

Gröbner bases were introduced in the sixties by Buchberger \cite{Buc1965}. Like Gaussian elimination method is used to solve a system of linear equations, Gröbner bases are useful to solve a system of non-linear algebraic equations
\begin{equation}\label{eq:algebraic-equations}
  \left\{
  \begin{split}
    & f_{1}(x_{1}, \dotsc , x_{n}) & = 0 \\
    & \dotsb & \\
    & f_{m}(x_{1}, \dotsc , x_{n}) & = 0
  \end{split}
  \right.
\end{equation}
where $f_{1}, \dotsc , f_{m}$ are polynomial functions in the variables $x_{1}, \dotsc , x_{n}$. Note however that in general, and even for one variable, it is useless to search for closed-form solutions. Therefore, what is expected is a procedure which produces a new system of algebraic equations which is \emph{simpler}. Contrary to Gauss elimination algorithm, where the variables are naturally ordered by the choice of a basis, we need to choose a total order on monomials in order to make the Gröbner bases algorithm to work. There are many total orders on monomials in several variables, the most common being the lexicographic order induced by $x_{1} < x_{2} < \dotsb < x_{n}$, and the resulting Gröbner basis will depend drastically on the choice of an order.

Let us illustrate what we mean here through an example. Consider, for instance, an intersection of three quadrics in $\RR^{3}$, given by the following non-linear system of three homogeneous polynomial equations of degree 2
\begin{equation}\label{eq:pol-system-1}
  \left\{
  \begin{split}
    x_{3}^{2} + x_{2}^{2} + x_{1}^{2} & = 1 \\
    x_{3}^{2} + x_{1}x_{2} & = 1 \\
    x_{1}x_{3} + x_{1}x_{2} & = 2.
  \end{split}
  \right.
\end{equation}
in the three variables $(x_{1}, x_{2},x_{3})$. The computation of a Gröbner basis for this system (with the lexicographic order induced by $x_{1} < x_{2} < x_{3}$) leads to the following equivalent system of equations
\begin{equation*}
  \left\{
  \begin{split}
    & 3 {x_{3}^{8}} + 3 {x_{3}^{6}} + 6 {x_{3}^{4}} + 3 {x_{3}^{2}} + 1  = 0 \\
    & x_{2} = - \frac{3}{2} {x_{3}^{7}} - 3 {x_{3}^{3}} + \frac{1}{2}x_{3} \\
    & x_{1} = -3 {x_{3}^{7}}-3 {x_{3}^{5}}-6 {x_{3}^{3}}-2 x_{3}.
  \end{split}
  \right.
\end{equation*}
Therefore, in this example, computing a Gröbner basis for a system of $n$ equations in $n$ variables leads to an equivalent system of equations of the form
\begin{equation}\label{eq:GB-system-1}
  \left\{
  \begin{split}
    & P_{n}(x_{n}) = 0 \\
    & x_{n-1} = P_{n-1}(x_{n}) \\
    & \dotsb  \\
    & x_{1} = P_{1}(x_{n})
  \end{split}
  \right.
\end{equation}
In other words, in that case, one has been able to reduce the non-linear algebraic system \eqref{eq:pol-system-1} to an equivalent triangular system consisting in one polynomial equation in the last variable $x_{n}$ and a list of $n-1$ equations which are solved in the remaining variables $x_{n-1}, \dotsc , x_{1}$. In particular, such a system \emph{has at most a finite number of solutions}.

This is generally what happens if one tries to solve an algebraic system of $n$ equations in $n$ variables but there exists, nevertheless, some \emph{degenerate} situations (as it is the case for linear systems when the determinant of the system vanishes). In the next example, we will illustrate this degeneracy. Consider the following set of equations
\begin{equation}\label{eq:pol-system-2}
  \left\{
  \begin{split}
    x_{1}^{2} + x_{2}^{2} + x_{3}^{2} & = 1 \\
    x_{1}x_{2} + x_{3}^{2} & = 1 \\
    x_{2}^{2} - x_{1} x_{2} + x_{1}^{2} & = 0.
  \end{split}
  \right.
\end{equation}
An equivalent system given by the computation of a Gröbner basis (for the lexicographic order induced by $x_{1} < x_{2} < x_{3}$) is
\begin{equation*}
  \left\{
  \begin{split}
    {{x}_{3}^{2}}+{x_{1}} {x_{2}}-1 & = 0 \\
    -{x_{2}} {{x}_{3}^{2}}+{x_{1}} {{x}_{3}^{2}}-{{x}_{2}^{3}}+{x_{2}}-{x_{1}} & = 0 \\
    {{x}_{3}^{4}}+{{x}_{2}^{2}}\, {{x}_{3}^{2}}-2 {{x}_{3}^{2}}+{{x}_{2}^{4}}-{{x}_{2}^{2}}+1 & = 0 \\
    {{x}_{3}^{2}}+{{x}_{2}^{2}}+{{x}_{1}^{2}}-1 & = 0
  \end{split}
  \right.
\end{equation*}
This time, the explicit solution of the problem is far less straightforward (it will be given and explained anyway below). What we can observe, however, is that the third equation in~\eqref{eq:pol-system-2} is a linear combination of the first two ones. Thus, the system is in fact rectangular (two equations in three variables, rather than three equations). More generally, this situation appears for systems of $n$ algebraic equations with $n$ variables each time the polynomials $f_{1}, \dotsc , f_{n}$ are \emph{algebraically dependent}.

Let us discuss now in which way we can interpret this procedure as an extension to systems of polynomial equations of the Gaussian elimination algorithm. In Gauss algorithm, a succession of invertible linear transformations reduces a general system of linear equations into one which is triangular. The first equation involves all the variables, the second equation does not involve $x_{1}$, the third equation does not involved $x_{1}, x_{2}$, \ldots. In the nonlinear case a similar process occurs somehow but requires, to be described correctly, to define the notion of \emph{ideal}.

An ideal $I$ of the algebra $\CC[x_{1}, \dotsc , x_{n}]$ is a subalgebra of $\CC[x_{1}, \dotsc , x_{n}]$ which is stable by multiplication by every polynomial in $\CC[x_{1}, \dotsc , x_{n}]$. More precisely, this means that
\begin{equation*}
  f \in I \quad \text{and} \quad p \in \CC[x_{1}, \dotsc , x_{n}] \implies pf \in I.
\end{equation*}
Now consider the system~\eqref{eq:algebraic-equations} and the ideal
\begin{equation*}
  I := \set{p_{1}f_{1} + \dotsb + p_{m}f_{m};\; p_{k} \in \CC[x_{1}, \dotsc , x_{n}]},
\end{equation*}
generated by $f_{1}, \dotsc , f_{m}$. It is clear that every $f \in I$ vanishes on each solution of~\eqref{eq:algebraic-equations}, and conversely that the solutions of~\eqref{eq:algebraic-equations} can be recast as the solutions of the infinite system of equations
\begin{equation*}
  f(x_{1}, \dotsc , x_{n}) = 0, \qquad f \in I.
\end{equation*}
Now, let us introduce the $l$-th \emph{elimination ideal}
\begin{equation*}
  I_{l} := I \cap \CC[x_{l+1}, \dotsc , x_{n}], \qquad l = 1, \dotsc , n-1.
\end{equation*}
Note that $I_{l}$ is an ideal of $\CC[x_{l+1}, \dotsc , x_{n}]$ but not of $\CC[x_{1}, \dotsc , x_{n}]$. It is however a subalgebra of $\CC[x_{1}, \dotsc , x_{n}]$ and we have
\begin{equation*}
  I_{n-1} \subset \dotsb \subset I_{1} \subset I.
\end{equation*}
Then, a Gröbner basis (computed using the lexicographic order induced by $x_{1} < x_{2} < \dotsb < x_{n}$) provides a new system of generators of $I$ which is compatible with the sequence of elimination ideals. Let us illustrate what we mean here, using our first example~\eqref{eq:pol-system-1}. In that case, the following Gröbner basis was computed
\begin{equation*}
  \left\{
  \begin{split}
    g_{1} & = -3 {{x}_{3}^{7}}-3 {{x}_{3}^{5}}-6 {{x}_{3}^{3}}-2 {{x_{3}}-{x_{1}}}, \\
    g_{2} & = 3 {{x}_{3}^{7}}+6 {{x}_{3}^{3}}-{x_{3}}+2 {x_{2}}, \\
    g_{3} & = -3 {{x}_{3}^{8}}-3 {{x}_{3}^{6}}-6 {{x}_{3}^{4}}-3 {{x}_{3}^{2}}-1.
  \end{split}
  \right.
\end{equation*}
In this example, $I$ is generated by $g_{1}, g_{2}, g_{3}$, $I_{1}$ by $g_{2}, g_{3}$ and $I_{2}$ by $g_{3}$. It is in this sense that a Gröbner basis can be considered as a \emph{triangulation} of the initial problem. Hence, in this example, solving the problem consists first in finding the roots of $g_{3}=0$, then calculating $x_{2}$ using $g_{2}=0$ and then $x_{1}$ using $g_{1}=0$. Consider now the second example~\eqref{eq:pol-system-2}. In that case, the following Gröbner basis was computed using the lexicographic order $x_{1} < x_{2} < x_{3}$
\begin{equation*}
  \left\{
  \begin{split}
    g_{1} & = {{x}_{3}^{2}}+{x_{1}} {x_{2}}-1, \\
    g_{2} & = -{x_{2}} {{x}_{3}^{2}}+{x_{1}} {{x}_{3}^{2}}-{{x}_{2}^{3}}+{x_{2}}-{x_{1}}, \\
    g_{3} & = {{x}_{3}^{2}}+{{x}_{2}^{2}}+{{x}_{1}^{2}}-1, \\
    g_{4} & = {{x}_{3}^{4}}+{{x}_{2}^{2}}\, {{x}_{3}^{2}}-2 {{x}_{3}^{2}}+{{x}_{2}^{4}}-{{x}_{2}^{2}}+1.
  \end{split}
  \right.
\end{equation*}
In this example, $I$ is generated by $g_{1}, g_{2}, g_{3}, g_{4}$, $I_{1}$ by $g_{4}$ and $I_{2}$ by $0$. We could continue here to explain the complete resolution of the problem but it appears that changing the order on monomials makes the resolution by far much more readable for a human being. Indeed, changing the lexicographic order $x_{1} < x_{2} < x_{3}$ to $x_{3} < x_{1} < x_{2}$ leads to the following Gröbner basis
\begin{equation*}
  \left\{
  \begin{split}
    g_{1} & = x_{3}^{2} - 1 + x_{1} x_{2}, \\
    g_{2} & = x_{1}^{2}-x_{1} x_{2} + x_{2}^{2},
  \end{split}
  \right.
\end{equation*}
and, then, $I$ is generated by $g_{1}, g_{2}$, $I_{1}:= I \cap \CC[x_{1},x_{2}]$ by $g_{2}$ and $I_{2} := I \cap \CC[x_{2}]$ by $0$. We will now proceed to the complete resolution of the system. First, we need to solve the equation in one variable $x_{2}$ given by $I_{2}$. Since $I_{2}$ is generated by $0$, this means that the variable $x_{2}$ is free. We will thus set $x_{2}=t$ ($t \in \CC$). Then, we need to solve the system of equations in two variables $(x_{2},x_{1})$ given by $I_{1}$. More precisely, since we have already solved the problem for $x_{2}$ (the system is triangular), we seek solutions $(x_{2},x_{1})$ of
\begin{equation*}
  x_{1}^{2}-x_{1} x_{2} + x_{2}^{2} = 0,
\end{equation*}
which \emph{extend} the solution $x_{2} = t$. Hence, $x_{2}$ is no more a variable here but a parameter. This equation has either one solution $(x_{2}=0,x_{1}=0)$ if $t=0$ or two conjugate imaginary solutions if $t \ne 0$. Now, we need to solve the system of equations in three variables $(x_{2},x_{1},x_{3})$ given by $I$. Hence, we need to solve the equation
\begin{equation*}
  x_{3}^{2} - 1 + x_{1} x_{2} = 0
\end{equation*}
but where $(x_{2}=a,x_{1}=b)$ is a solution of the previous step and where $x_{2}=a$ and $x_{1}=b$ should be considered as parameters of the problem. Such a solution is said \emph{to extend} the previous one. In our example, we find exactly two solutions for $x_{3}$ for each solution $(x_{2}=a,x_{1}=b)$ (because the coefficient of $x_{3}^{2}$ is one). In other examples, nevertheless, some solutions could be  \emph{not extendable} (for example, if the coefficient of $x_{3}^{2}$ depends on $a$, $b$ and vanishes for some values of $a$ and $b$). This example illustrate the \emph{triangular process} allowed by the computation of a Gröbner basis in solving non-linear algebraic equations. Note finally that solutions are sought in $\CC$. In this example, there are an infinite number of complex solutions. Of course, it may happen that there are no real solution at all.

\begin{rem}\label{rem:coeffQ}
  The Gröbner bases are exact when computed over the field $\QQ$ of rational numbers. It is a natural question whether or not, one could work with Groebner bases with coefficients in the field of real or complex numbers, or to be more exact, using floating numbers. In practice, this is a difficult topic since there are convergence/accuracy issues. Anyway, this subject is a research area called \emph{Groebner bases with coefficients in an inexact field}. We redirect the interested reader to~\cite{Nag2009}.
\end{rem}


\end{document}